\documentclass[10pt]{amsart}
\usepackage{amssymb, amscd, amsmath, amsthm, amscd,curves,
epsf, epsfig, latexsym, enumerate, pstricks,graphics}

\newtheorem*{thm}{Theorem}
\newtheorem*{lem}{Lemma}
\newtheorem*{cor*}{Corollary}

\begin{document}

\title[ Some questions on subgroups of $PD_3$-groups]{Some questions on subgroups of 3-dimensional Poincar\'e duality groups}
 
\author{J.A.Hillman}
\address{School of Mathematics and Statistics\\
     University of Sydney, NSW 2006\\
      Australia }

\email{jonathan.hillman@sydney.edu.au}

\begin{abstract}
Poincar\'e duality complexes model the homotopy types of closed manifolds.
In the lowest dimensions the correspondence is precise: 
every connected $PD_n$-complex  is homotopy equivalent 
to $S^1$ or to a closed surface, 
when $n=1$ or 2.
Every $PD_3$-complex has an essentially unique factorization 
as a connected sum of indecomposables, 
and these are either aspherical or have virtually free fundamental group.
There are many examples of the latter type which are not homotopy equivalent to 3-manifolds,
but the possible groups are largely known.
However the question of whether every aspherical $PD_3$-complex is homotopy
equivalent to a 3-manifold remains open.

We shall outline the work which lead to this reduction to the aspherical case, 
mention briefly remaining problems in connection with indecomposable 
virtually free fundamental groups,
and consider how we might show that $PD_3$-groups are 3-manifold groups.
We then state a number of open questions on 3-dimensional Poincar\'e duality 
groups and their subgroups, motivated by considerations from 3-manifold topology.
\end{abstract}

\keywords{Poincar\'e duality, $PD_3$-complex, $PD_3$-group}

\subjclass{57M25}

\maketitle
The first half of this article corresponds to my talk at the Luminy conference 
{\it Structure of 3-manifold groups\/} (26 February -- 2 March, 2018). 
When I was first contacted about the conference, 
it was suggested that I should give an expository talk on $PD_3$-groups and related open problems.
Wall gave a comprehensive survey of {\it Poincar\'e duality in dimension 3\/}
at the CassonFest in 2004, in which he considered the splitting of $PD_3$-complexes 
as connected sums of aspherical complexes and complexes with virtually free fundamental group,
and the JSJ decomposition of $PD_3$-groups along $\mathbb{Z}^2$ subgroups.
I have concentrated on the work done since then, 
mostly on $PD_3$-complexes with virtually free fundamental group, 
before considering possible approaches to showing that  aspherical $PD_3$-complexes 
might all be homotopy equivalent to closed 3-manifolds.

The second half is an annotated list of questions about $PD_3$-groups and their subgroups,
with relevant supporting evidence,
mostly deriving from known results for 3-manifold groups.
This began as an aide-memoire thirty years ago,
and was put on the arXiv in 2016.
The questions are straightforward, but  have largely resisted answers,
and  suggest the limitations of our present understanding.

\section{poincar\'e duality complexes}

Poincar\'e duality complexes were introduced by Wall to
model the homotopy types of closed manifolds \cite{wall}.

Let $X$ be a connected finite CW complex with fundamental group $\pi$,
and let $\Lambda=\mathbb{Z}[\pi]$ be the integral group ring, with its canonical involution
determined by inversion: $g\mapsto{g^{-1}}$.
Let $C_*(X;\Lambda)$ be the cellular chain complex of the universal cover $\widetilde{X}$,
considered as a complex of finitely generated free left modules over $\Lambda$.
Taking the $\Lambda$-linear dual modules and using the canonical involution of $\Lambda$
to swap right and left $\Lambda$-module structures gives a cochain complex $C^*(X;\Lambda)$ with 
\[
C^q(X;\Lambda)=\overline{Hom_\Lambda(C_q(X;\Lambda),\Lambda)}.
\]
Then $X$ is an {\it orientable (finite) $PD_n$-complex\/} if there is 
a fundamental class $[X]\in{H_n(X;\mathbb{Z})}$ such that slant product with a locally-finite 3-chain
in $C_3(\widetilde{X};\mathbb{Z})$ with image $[X]$ induces a chain homotopy equivalence  
\[
-\cap[X]:C^{n-*}(X;\Lambda)\to{C_*(X;\Lambda)}.
\]
This definition can be elaborated in various ways, firstly to allow for non-orientable analogues,
secondly to consider pairs (corresponding to manifolds with boundary),
and thirdly to weaken the finiteness conditions.
We shall focus on the orientable case, for simplicity, but $PD_3$-pairs arise naturally 
even when the primary interest is in the absolute case.
(Examples of the third type may arise as infinite cyclic covers of manifolds.)

Wall showed that every $PD_n$-complex is homotopy equivalent to an $n$-dimensional complex,
and in all dimensions {\it except\/} $n=3$ we may assume that there is a single $n$-cell.
Moreover, this top cell is essentially unique, and so there is a well-defined connected sum, 
for oriented $PD_n$-complexes.
When $n=3$  we may write $X$ as a union $X=X'\cup{e^3}$, where $c.d.X'\leq2$.
(Thus the exceptional case relates to the Eilenberg-Ganea Conjecture.)

Closed PL $n$-manifolds are finite $PD_n$-complexes, but there are simply
connected $PD_n$-complexes which are not homotopy equivalent to manifolds, 
in every dimension $n\geq4$.

\section{poincar\'e duality groups}

The notion of Poincar\'e duality group of dimension $n$ (or $PD_n $-group, 
for short) is an algebraic analogue of the notion of aspherical $n$-manifold. 

A finitely presentable group $G$ is a {\it $PD_n$-group\/} in the sense of Johnson and Wall if 
$K(G,1)$ is homotopy equivalent to a $PD_n$-complex \cite{jw72}.
Bieri and Eckmann gave an alternative purely algebraic formulation:
a group $G$ is a $PD_n$-group if the augmentation $\mathbb{Z}[G]$-module $\mathbb{Z}$ has a finite
projective resolution, $c.d.G=n$, $H^i(G;\mathbb{Z}[G])=0$ for $i<n$ 
and $H^n(G;\mathbb{Z}[G])$ is infinite cyclic as an abelian group \cite{be78}.
The right action of $G$ on this group determines the orientation character
$w_1(G):G\to\mathbb{Z}^\times$.
The group $G$ is {\it orientable\/} if  $H^n(G;\mathbb{Z}[G]$ is the augmentation module
(i.e., if $w_1(G)$ is the trivial homomorphism).

$PD_n$-groups are $FP$, and so are finitely generated, 
but there examples which are not finitely presentable, in every dimension $n\geq4$ \cite{da00}.
Whether there are $PD_3$-groups which are not finitely presentable remains unknown.
(The case $n=3$ is critical; there are examples of  $PD_n$-groups with 
all sorts of bad behaviour when $n>3$.
See \cite{da00} and the references there.)

It is still an open question whether every finitely presentable $PD_n$-group is the fundamental
group of a closed $n$-manifold.
(This is one aspect of the circle of ideas around the Novikov Conjecture.)

[If we define a {\it $PD_n$-space\/} to be a space homotopy equivalent to a $CW$-complex $X$ 
and such that $C_*(X;\Lambda)$ is chain homotopy equivalent to a finite complex of  
finitely generated {\it projective\/} left $\mathbb{Z}[\pi_1(X)]$-modules 
and with a class $[X]$ defining a duality chain homotopy equivalence as for $PD_n$-complexes,
then $G$ is a $PD_n$-group in the sense of Bieri and Eckmann if and only if 
$K(G,1)$ is an (aspherical) $PD_n$-space.
A $PD_n$-space $X$ is finitely dominated if and only if $\pi_1(X)$
is finitely presentable \cite{browd}.]

There is also a relative notion, of $PD_n$-pair of groups, which corresponds to an aspherical
compact manifold with incompressible, aspherical boundary components.
This notion arises naturally in connection with JSJ-decompositions of $PD_3$-groups.

Bieri  and Eckmann defined a $PD_n$-pair of groups $(G,\mathcal{S})$ 
as a group $G$  of finite cohomological dimension with a 
finite family $\mathcal{S}$ of monomorphisms $j_S:S\to{G}$, 
satisfying a modified form of Poincar\'e-Lefshetz duality.
The ``boundary" subgroups $j_S(S)$ are $PD_{n-1}$-groups, 
and $c.d.G=n-1$ if $\mathcal{S}$ is nonempty.
(There is another formulation, 
in terms of a group $G$ and a $G$-set $W$.
In this formulation the boundary subgroups arise as stabilizers
of points of $W$.
See \cite[page 138]{dd}.)

\section{low dimensions}

When $n=1$ or 2 the modelling of $n$-manifolds by $PD_n$-complexes
is precise: the only such complexes are homotopy equivalent to
the circle or to a closed surface, and two such manifolds 
are homeomorphic if and only if their groups are isomorphic. 

It is easy to see that a $PD_1$-complex $X$ must be aspherical,
and $\pi=\pi_1(X)$ has two ends and $c.d.\pi=1$.
Since $\pi$  is free of finite rank $r>0$ and $H_1(\pi;\mathbb{Z})$ is cyclic 
(or since $\pi$ is torsion-free and has two ends), 
$\pi\cong\mathbb{Z}$ and $X\simeq{S^1}$.
(There are elementary arguments do not require
cohomological characterizations of free groups
or of the number of ends.)

Every $PD_2$-complex with finite fundamental group is homotopy equivalent 
to either $S^2$ or the real projective plane $\mathbb{RP}^2$.
All others are aspherical.
Eckmann and M\"uller  showed that  every $PD_2$-complex with $\chi(X)\leq0$
is  homotopy equivalent to a closed surface, 
by first proving the corresponding result for $PD_2$-pairs  with nonempty boundary
and then showing that every $PD_2$-group splits over a copy of $\mathbb{Z}$ \cite{em80}.
Shortly afterwards, Eckmann and Linnell showed that there is no aspherical
$PD_2$-complex $X$ with $\chi(X)>0$ \cite{el81}.
Much later  Bowditch used ideas from geometric group theory to prove 
the stronger result that if $G$ is a finitely generated group such that 
$H^2(G;\mathbb{F}[G])$ has an $\mathbb{F}[G]$-submodule
of finite dimension over $\mathbb{F}$, for some field $\mathbb{F}$,
 then $G$ is commensurable with a surface group 
(i.e., the fundamental group of an aspherical closed surface) \cite{bow}.
One might hope for a topological argument,
based on improving a degree-1 map $f:M\to{X}$ with domain a closed surface.

The first non-manifold example occurs in dimension $n=3$.
Swan showed that every finite group of cohomological period 4 
acts freely on a finite-dimensional cell complex homotopy equivalent to $S^3$ \cite{swan}. 
The quotient complexes are $PD_3$-complexes.
(Swan's result predates the notion of $PD$-complex!)
However, if the group has non-central elements of order 2,
it cannot act freely on $S^3$ \cite{mil},
and so is not a 3-manifold group.
In particular, the symmetric group $S_3$ has cohomological period 4, 
but is not a 3-manifold group.
(By the much later work of Perelman, 
the finite 3-manifold groups are the fixed-point free finite subgroups of $SO(4)$.)

In these low dimensions $n\leq3$ it suffices to show that there 
is some chain homotopy equivalence
$C^{n-*}(X;\Lambda)\simeq{C_*(X;\Lambda)}$; 
that it is given by cap product with a fundamental class follows.

The {\it fundamental triple} of a $PD_3$-complex $X$ is $(\pi,w,c_{X*}[X])$,
where $\pi=\pi_1(X)$, $w=w_1(X)$ is the orientation character,
$c_X:X\to{K(\pi,1)}$ is the classifying map and $[X]$ is the fundamental class
in $H_3(X;\mathbb{Z}^w)$.
There is an obvious notion of isomorphism for such triples.
(Note, however, that in the non-orientable case it is only meaningful 
to specify the sign of $[X]$ if we work with pointed spaces.)
Hendriks showed that this is a complete homotopy invariant for such complexes.

\begin{thm}
\cite{he77}
Two $PD_3 $-complexes are (orientably) homotopy equivalent if and only 
if their fundamental triples are isomorphic.
\qed
\end{thm}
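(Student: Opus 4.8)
The plan is to prove the two implications separately; the forward one is essentially formal, and the converse — manufacturing a homotopy equivalence out of an isomorphism of triples — carries all the content. For the easy direction, suppose $h\colon X\to Y$ is an orientation-preserving homotopy equivalence of $PD_3$-complexes. Then $h_\ast\colon\pi_1(X)\to\pi_1(Y)$ is an isomorphism, and composing a classifying map for $Y$ with $h$ yields a map $X\to K(\pi_1(Y),1)$ classifying $h_\ast$, so the classifying maps fit into a homotopy-commutative square. Since the orientation character is a homotopy invariant (for instance $X$ is orientable precisely when $H_3(X;\mathbb{Z})\cong\mathbb{Z}$, and in general $w_1$ is read off the $\pi_1$-action on $H^3(-;\mathbb{Z}[\pi])$), we get $h^\ast w_1(Y)=w_1(X)$; and as $h$ is orientation-preserving, $h_\ast[X]=[Y]$ in $H_3(Y;\mathbb{Z}^w)$. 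Pushing this forward to $K(\pi_1(Y),1)$ gives an isomorphism of fundamental triples.

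For the converse, use the given isomorphism to identify $\pi_1(X)=\pi_1(Y)=\pi$ and $w_1(X)=w_1(Y)=w$, and choose classifying maps $c_X,c_Y$ so that $c_{X\ast}[X]=c_{Y\ast}[Y]$ in $H_3(\pi;\mathbb{Z}^w)$. It suffices to build a map $f\colon X\to Y$ inducing the identity on $\pi_1$ and of degree one, i.e.\ with $f_\ast[X]=[Y]$; I will then promote it to a homotopy equivalence. By Wall's structure theorem I may take $X=X''\cup_\varphi e^3$ with $\mathrm{c.d.}\,X''\le2$. Build a map on the $1$-skeleton realising $\mathrm{id}_\pi$; because a $\pi_1$-isomorphism carries relations to relations, it extends over the $2$-cells of $X''$, and — since $H^3(X'';\pi_2(Y))=0$ — after possibly altering the choice over the $2$-skeleton it extends over all of $X''$ to a map $f_0\colon X''\to Y$. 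The homotopy classes of such $f_0$ rel the $1$-skeleton are controlled by $H^2(X'';\pi_2(Y))$.

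The remaining, and crucial, step is to extend $f_0$ across $e^3$. The obstruction is $o(f_0)=[f_0\circ\varphi]\in\pi_2(Y)$; here one identifies, via Poincar\'e duality for $Y$, the module $\pi_2(Y)=H_2(Y;\mathbb{Z}[\pi])\cong\overline{H^1(\pi;\mathbb{Z}[\pi])}$, together with the corresponding description for $X$ and the relevant part of $\pi_3$. Changing $f_0$ within its indeterminacy alters $o(f_0)$ by the image of $H^2(X'';\pi_2(Y))$ under the boundary map attached to $e^3$, and the residual ambiguity is measured, after passing to $K(\pi,1)$, by the difference $c_{X\ast}[X]-c_{Y\ast}[Y]\in H_3(\pi;\mathbb{Z}^w)$. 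Since this difference vanishes, $f_0$ can be chosen with $o(f_0)=0$, hence extended to $f\colon X\to Y$; the same matching of classes, now with signs, pins down $f_\ast[X]=[Y]$ rather than merely $\pm[Y]$. This bookkeeping — reconciling a chain-level fundamental class with the attaching map of the top cell relative to the $2$-skeleton, through the $\mathbb{Z}[\pi]$-module structures on $\pi_2$ and $\pi_3$ supplied by duality — is, I expect, the main obstacle, and is the technical heart of Hendriks' argument.

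Finally, lift $f$ to a $\pi$-equivariant map $\widetilde f\colon\widetilde X\to\widetilde Y$ of universal covers. Since $f$ is compatible with the classifying maps, $f^\ast$ is an isomorphism on $H^1(-;\mathbb{Z}[\pi])$ (both groups being $H^1(\pi;\mathbb{Z}[\pi])$); combined with naturality of the cap product, the duality isomorphisms $-\cap[X]$ and $-\cap[Y]$, and the equality $f_\ast[X]=[Y]$, this forces $f_\ast$ to be an isomorphism on $H_2(-;\mathbb{Z}[\pi])=\pi_2$. As $H_1(\widetilde X)=H_1(\widetilde Y)=0$, and $H_3$ of the universal covers either vanishes (when $\pi$ is infinite, by duality) or is $\mathbb{Z}$ on which $\widetilde f$ acts with degree one (when $\pi$ is finite, the covers being homotopy $3$-spheres), $\widetilde f$ is an integral homology isomorphism of simply-connected complexes, hence a homotopy equivalence by Whitehead's theorem; therefore $f$ is one too, and it is orientation-preserving by the normalisation $f_\ast[X]=[Y]$.
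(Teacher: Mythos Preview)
The paper does not prove this theorem: it is stated as a cited result of Hendriks \cite{he77} and closed with a \qed. So there is no ``paper's own proof'' to compare against; your proposal is effectively an attempt to reconstruct Hendriks' argument.

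As a sketch it is along the right lines. The forward direction is fine, and the final step --- that a map between $PD_3$-complexes inducing an isomorphism on $\pi_1$ and sending $[X]$ to $[Y]$ is a homotopy equivalence, via Poincar\'e duality and Whitehead --- is standard and correctly stated. The decomposition $X\simeq X''\cup e^3$ with $c.d.\,X''\le 2$ is exactly what the paper records from Wall (note that one does \emph{not} get a $2$-dimensional $X''$ in general, only one of cohomological dimension $\le 2$; you use this correctly when invoking $H^3(X'';\pi_2(Y))=0$, but your phrase ``extends over the $2$-cells of $X''$'' is slightly misleading).

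The genuine gap is the one you yourself flag: the assertion that the residual obstruction to extending $f_0$ over the top cell, modulo the indeterminacy coming from $H^2(X'';\pi_2(Y))$, is precisely the difference $c_{X*}[X]-c_{Y*}[Y]$ in $H_3(\pi;\mathbb{Z}^w)$. You have not established this identification; you have only asserted it and said it is ``the technical heart of Hendriks' argument''. That is honest, but it means the proposal is an outline rather than a proof. Making this step rigorous requires tracking the attaching class of $e^3$ through the chain-level duality isomorphism and relating the cokernel of the relevant difference map to $H_3(\pi;\mathbb{Z}^w)$; this is nontrivial and is where Hendriks (and later Turaev, in his reformulation) do real work. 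Similarly, your claim that the sign of the degree is pinned down ``by the same matching of classes'' needs an argument: one must check that among the extensions produced, at least one has $f_*[X]=[Y]$ rather than $-[Y]$, which again comes out of the detailed obstruction calculation.
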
    

Turaev has characterized the possible triples corresponding to 
a given finitely presentable group and orientation character
(the ``Realization Theorem").
In particular, he gave the following criterion.

\begin{thm}
\cite{tur}
A finitely presentable group $\pi$ is the fundamental group of an orientable finite $PD_3$-complex 
if and only if $I_\pi\oplus\Lambda^r\cong{J_\pi}\oplus\Lambda^s$ for some $r,s\geq0$, 
where $I_\pi$ is the augmentation ideal of $\pi$,
with finite rectangular presentation matrix $M$, and $J_\pi=\mathrm{Coker}(\overline{M})$.
\qed
\end{thm}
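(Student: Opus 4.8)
The plan is to translate the statement into one about finite free $\Lambda$-chain complexes, $\Lambda=\mathbb{Z}[\pi]$, and to identify $I_\pi$ and $J_\pi$ as representatives of a finitely generated module and its Auslander--Bridger transpose. Fix a finite presentation $\langle x_1,\dots,x_g\mid\rho_1,\dots,\rho_r\rangle$ of $\pi$. The equivariant chain complex of the associated presentation $2$-complex $P$ is $0\to\Lambda^r\xrightarrow{M}\Lambda^g\xrightarrow{d_1}\Lambda\to0$, where $d_1(e_i)=x_i-1$ and $M$ is the Jacobian of Fox derivatives; since $\widetilde{P}$ is simply connected, $\mathrm{Im}(M)=\ker(d_1)$, so $M$ is a finite free presentation of $I_\pi\cong\mathrm{Coker}(M)$, and $\overline{M}$ presents the transpose, $J_\pi=\mathrm{Coker}(\overline{M})$. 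As the transpose of a module is well defined up to addition of free summands, the theorem should reduce to the claim that $\pi$ is the group of an orientable finite $PD_3$-complex exactly when $I_\pi$ is stably isomorphic to its transpose.

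I would begin with the forward implication. Let $X$ be an orientable finite $PD_3$-complex with $\pi_1(X)\cong\pi$. By Wall's theorem $X$ is homotopy equivalent to a finite $3$-complex, so $C_*(X;\Lambda)$ is a finite free complex $0\to C_3\xrightarrow{\partial_3}C_2\xrightarrow{\partial_2}C_1\xrightarrow{\partial_1}\Lambda\to0$ (one $0$-cell, so $C_0=\Lambda$). Since $H_0(\widetilde{X})=\mathbb{Z}$, $\mathrm{Im}(\partial_1)=I_\pi$; since $H_1(\widetilde{X})=0$, $\mathrm{Im}(\partial_2)=\ker(\partial_1)$; hence $C_2\xrightarrow{\partial_2}C_1\to I_\pi\to0$ is a finite free presentation, so $I_\pi\cong\mathrm{Coker}(\partial_2)$. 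Now pass to the dual complex $C^*(X;\Lambda)$, whose coboundary $\delta^q$ is the conjugate transpose of the matrix of $\partial_{q+1}$; thus $\mathrm{Coker}(\delta^1)$ is the Auslander--Bridger transpose of this presentation of $I_\pi$, hence stably isomorphic to $J_\pi$. On the other hand, orientability gives untwisted Poincar\'e duality $H^q(X;\Lambda)\cong H_{3-q}(X;\Lambda)$, so $H^2(X;\Lambda)\cong H_1(\widetilde{X})=0$ and $H^3(X;\Lambda)\cong H_0(\widetilde{X})=\mathbb{Z}$ with trivial action. The vanishing of $H^2$ gives $\mathrm{Im}(\delta^1)=\ker(\delta^2)$, and the value of $H^3$ identifies $\mathrm{Im}(\delta^2)$ with the kernel of a surjection onto $\mathbb{Z}$, hence with a copy of $I_\pi$ plus a free module. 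Therefore $\mathrm{Coker}(\delta^1)\cong C^2/\ker(\delta^2)\cong\mathrm{Im}(\delta^2)$ is $I_\pi$ up to free summands, and combining with the previous sentence yields $I_\pi\oplus\Lambda^r\cong J_\pi\oplus\Lambda^s$ for suitable $r,s\ge0$.

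For the converse I would realise the required chain complex geometrically, on top of $P$. Wedging $k$ copies of $S^2$ onto $P$ produces $P_k$ whose equivariant chain complex has $C_2=\Lambda^r\oplus\Lambda^k$ with boundary $(M\ 0)$, and $\pi_2(P_k)=\pi_2(P)\oplus\Lambda^k$; attaching a single $3$-cell along a class $z\in\pi_2(P_k)$ gives $X=P_k\cup_z e^3$ with chain complex $0\to\Lambda\xrightarrow{z}\Lambda^{r+k}\xrightarrow{(M\ 0)}\Lambda^g\xrightarrow{d_1}\Lambda\to0$. Its dual $C^{3-*}(X;\Lambda)$ is the reversed complex with conjugate-transposed matrices; its degree-zero homology is $\Lambda$ modulo a left ideal determined by $z$, and the cokernel of its middle map is $J_\pi\oplus\Lambda^k$. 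I would use the hypothesis $I_\pi\oplus\Lambda^r\cong J_\pi\oplus\Lambda^s$ to choose $k$ large and to choose $z\in\ker(M\ 0)$ with all coordinates in the augmentation ideal and generating it, so that the isomorphism, together with the presentation $\mathrm{Coker}(M\ 0)=I_\pi$ and the co-presentation with cokernel $J_\pi\oplus\Lambda^k$, assembles into a chain homotopy equivalence $C^{3-*}(X;\Lambda)\simeq C_*(X;\Lambda)$. Since $X$ is $3$-dimensional, the existence of such an equivalence forces $X$ to be a $PD_3$-complex, with the duality realised by cap product with a fundamental class; the choice of $z$ makes $H^3(X;\Lambda)=\mathbb{Z}$ with trivial action, so $X$ is orientable, and $\pi=\pi_1(P)=\pi_1(X)$ is realised.

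The converse is where I expect the real work to lie. Extracting the stable class of $I_\pi$ from a $PD_3$-complex is essentially a duality computation, but reversing it means converting the bare isomorphism $I_\pi\oplus\Lambda^r\cong J_\pi\oplus\Lambda^s$ into explicit geometric data — how many auxiliary $2$-cells to attach and an attaching $3$-chain whose coordinates generate the augmentation ideal — and then checking that the comparison map one obtains is a genuine chain homotopy equivalence rather than a complex with merely the right ranks in each degree; matching the two complexes in the middle degrees, where the ranks need not agree, is the delicate point, and the free summands $\Lambda^r,\Lambda^s$ record the freedom to stabilise by cancelling handle pairs. A further point needing separate treatment is finite $\pi$: there $\widetilde{X}\simeq S^3$ and one is in the setting of Swan's theorem on groups of cohomological period $4$, so $H_3(\widetilde{X})\neq0$ and some of the vanishing arguments above must be adjusted.
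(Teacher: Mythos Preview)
The paper does not contain a proof of this theorem. It is stated as a result of Turaev, with the citation \cite{tur} and a \qed\ immediately following the statement; no argument is given or even sketched. (This is part of the paper's survey of background results leading up to the open questions.) So there is nothing in the paper to compare your proposal against.

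That said, your outline is broadly in the spirit of Turaev's original argument: identify $I_\pi$ and $J_\pi$ as a module and its transpose via the Fox Jacobian, read the stable isomorphism off the duality chain equivalence in one direction, and in the other direction attach a $3$-cell to a thickened presentation complex along a carefully chosen class. A few remarks on the details. In the forward direction your identification of $\mathrm{Im}(\delta^2)$ as ``$I_\pi$ plus a free module'' is correct but deserves a word of justification: you have two short exact sequences $0\to I_\pi\to\Lambda\to\mathbb{Z}\to0$ and $0\to\mathrm{Im}(\delta^2)\to C^3\to\mathbb{Z}\to0$ with $C^3$ free, and Schanuel's lemma gives the stable isomorphism. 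Note also that the paper explicitly warns (in the paragraph after the definition of $PD_n$-complex) that for $n=3$ one \emph{cannot} in general assume a single top cell, so you are right not to take $C_3=\Lambda$; the Schanuel step is genuinely needed. In the converse direction your own caveats are well placed: the passage from the bare stable isomorphism to an explicit attaching class $z$ and then to a chain homotopy equivalence is exactly where Turaev's argument does real work, and your sketch does not yet supply it. The final remark about finite $\pi$ is also apt; the paper notes separately that in low dimensions the existence of \emph{some} chain equivalence $C^{n-*}\simeq C_*$ already forces it to be cap product with a fundamental class, which is what lets the construction finish.
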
    

C.B.Thomas gave an alternative set of invariants,
for orientable 3-manifolds, based on the Postnikov approach \cite{thom67}.
(The present formulation was introduced by Swarup, for orientable
3-manifolds, in \cite{swar74}.)

When $\pi$ is finite, $X$ is orientable and $\widetilde{X}\simeq{S^3}$,
and $X$ is determined by $\pi$ and the first nontrivial $k$-invariant $\kappa_2(X)\in{H^4(\pi;\mathbb{Z})}$.
Let $\beta:H^3(\pi;\mathbb{Q}/\mathbb{Z})\cong{H^4(\pi;\mathbb{Z})}$ be the Bockstein isomorphism.
Then $c_{X*}[X]$ and $\kappa_2(X)$  generate isomorphic cyclic groups,
and are paired by the equation $\beta^{-1}(\kappa_2(X))(c_{X*}[X])=\frac1{|\pi|}$.

When $\pi$ is infinite, $\pi_2(X)\cong\overline{H^1(\pi;\mathbb{Z}[\pi])}$
and $X$ is determined by the triple $(\pi,w,\kappa_1(X))$,
where $\kappa_1(X)\in{H^3(\pi;\pi_2(X))}$ is now the first nontrivial $k$-invariant.
In this case the connection between the two sets of invariants is not so clear.

The work of Turaev has been extended to the case of $PD_3$-pairs with aspherical boundary components
by Bleile \cite{bl}.
The relative version of the Realization Theorem proven there requires also 
that the boundary components be $\pi_1$-injective.
The Loop Theorem of Crisp \cite{cr2} should also be noted here.

The  homotopy type of a higher dimensional $PD_n$-complex $X$ is determined by
the triple $(P_{n-2}(X),w,f_{X*}[X])$,
where $f_X:X\to{P_{n-2}(X)}$ is the Postnikov $(n-2)$-stage  and 
$w=w_1(X)$ \cite{BB08}.
If $\widetilde{X}$ is $(n-2)$-connected then $P_{n-2}(X)\simeq{K(\pi,1)}$,
so this triple is a direct analogue of Hendriks' invariant.

\section{reduction to indecomposables}

In his foundational 1967 paper Wall asked whether $PD_3$-complexes
behaved like 3-manifolds with regards to connected sum  \cite{wall}.
Consider the following conditions
\begin{enumerate}
\item$X$ is a non-trivial connected sum;

\item$\pi=\pi_1(X)$ is a nontrivial free product;

\item either $\pi$ has infinitely many ends, or $\pi\cong{D_\infty}=Z/2Z*Z/2Z$.
\end{enumerate}
Clearly $(1)\Rightarrow(2)\Rightarrow(3)$.
Wall asked whether either of these implications could be reversed.
Turaev used his Realization Theorem to show that $(1)\Leftrightarrow(2)$ 
(the ``Splitting Theorem").

\begin{thm}
\cite{tur}
A $PD_3 $-complex $X$ is indecomposable with respect to connected sum 
if and only if $\pi=\pi_1(X)$  is indecomposable with respect to free product.
\qed
\end{thm}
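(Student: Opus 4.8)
The implication $(1)\Rightarrow(2)$ is the elementary half, already noted above: a simply-connected $PD_3$-complex has the homology of $S^3$ and so, by the theorems of Hurewicz and Whitehead, is homotopy equivalent to $S^3$; hence in a non-trivial connected sum $X\simeq{X_1}\#{X_2}$ neither summand is simply-connected, and van Kampen's theorem gives $\pi\cong\pi_1(X_1)*\pi_1(X_2)$, a non-trivial free product. So the substance of the theorem is the converse. Assume $\pi=\pi_1(X)$ splits as a non-trivial free product $G_1*G_2$, put $w=w_1(X)$ and $w_i=w|_{G_i}$, and let $\mu=c_{X*}[X]\in{H_3(\pi;\mathbb{Z}^w)}$. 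The plan is to realize each free factor by a finite $PD_3$-complex carrying the appropriate ``piece'' of the fundamental triple of $X$, to take the connected sum of these, and to recognize the result as $X$ by Hendriks' theorem.

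In more detail, I would proceed as follows. Since $K(\pi,1)\simeq{K(G_1,1)}\vee{K(G_2,1)}$, the homology of the free product splits in degrees $\geq2$ (Mayer--Vietoris), giving a canonical decomposition $H_3(\pi;\mathbb{Z}^w)\cong{H_3(G_1;\mathbb{Z}^{w_1})}\oplus{H_3(G_2;\mathbb{Z}^{w_2})}$; write $\mu=(\mu_1,\mu_2)$ accordingly. Turaev's Realization Theorem attaches to $(\pi,w)$ a condition on $\mu$ --- the stable isomorphism $I_\pi\oplus\Lambda^r\cong{J_\pi}\oplus\Lambda^s$ in the orientable case, and its twisted analogue in general --- that is necessary and sufficient for $(\pi,w,\mu)$ to be the fundamental triple of a finite $PD_3$-complex. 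The crux is to show that this realizability condition decomposes over the free product: $(G_1*G_2,w,\mu)$ is realizable if and only if each of $(G_1,w_1,\mu_1)$ and $(G_2,w_2,\mu_2)$ is. Granting this, realizability of $(\pi,w,\mu)$ --- witnessed by $X$ itself --- would force each $(G_i,w_i,\mu_i)$ to be realizable, so by the Realization Theorem there is a finite $PD_3$-complex $Y_i$ with that fundamental triple. I would then set $Y=Y_1\#Y_2$: its fundamental group is $G_1*G_2=\pi$; its orientation character restricts to $w_i$ on $G_i$ and hence equals $w$; and, since the classifying map of a connected sum restricts on each punctured summand to the classifying map of that summand while the fundamental class of the sum is carried to the pair of fundamental classes, $c_{Y*}[Y]=(\mu_1,\mu_2)=\mu$. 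Thus $Y$ and $X$ would have isomorphic fundamental triples, whence $X\simeq{Y}$ by Hendriks' theorem; and since $\pi_1(Y_i)=G_i$ is non-trivial, no $Y_i$ is homotopy equivalent to $S^3$, so this exhibits $X$ as a non-trivial connected sum.

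The main obstacle is proving that the realizability condition decomposes over a free product. The objects involved do respect the free product structure: $\Lambda=\mathbb{Z}[\pi]$ contains each $\mathbb{Z}[G_i]$ as a subring and retracts onto it by killing the complementary factor; the augmentation ideal splits as $I_\pi\cong(\Lambda\otimes_{\mathbb{Z}[G_1]}I_{G_1})\oplus(\Lambda\otimes_{\mathbb{Z}[G_2]}I_{G_2})$, so a presentation of $\pi$ adapted to the free product presents $I_\pi$ by a block-diagonal matrix $M=M_1\oplus{M_2}$ with each $M_i$ a presentation matrix for $I_{G_i}$, whence $J_\pi=\mathrm{Coker}(\overline{M})$ inherits the analogous decomposition. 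Pushing the stable isomorphism supplied by $X$ down along the retractions $\Lambda\to\mathbb{Z}[G_i]$ should in principle recover the conditions for the factors --- but one must then account for the modules induced up from the (possibly finite) abelianizations $G_j^{\mathrm{ab}}$, together with the analogous torsion coming from $J_{G_j}$, which appear on both sides and must be shown to agree and to cancel, all while keeping the involution and the stably-free summands straight; and there is the parallel point that the component $\mu_i$ of a genuine fundamental class is again a genuine fundamental class. This delicate bookkeeping is the technical heart of the argument; everything else is either already in hand (Hendriks' theorem, Turaev's Realization Theorem) or is routine homological algebra of free products.
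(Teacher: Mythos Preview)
The paper does not give its own proof of this theorem: it is stated with a \qed and attributed to Turaev \cite{tur}, with only the remark that ``Turaev used his Realization Theorem to show that $(1)\Leftrightarrow(2)$.'' Your sketch follows exactly this route---decompose the fundamental triple over the free product via $H_3(G_1*G_2;\mathbb{Z}^w)\cong H_3(G_1;\mathbb{Z}^{w_1})\oplus H_3(G_2;\mathbb{Z}^{w_2})$, invoke the Realization Theorem on each factor, reassemble by connected sum, and conclude by Hendriks' classification---and you correctly locate the technical crux in showing that Turaev's stable-isomorphism criterion for realizability is compatible with free products. That is indeed the substance of Turaev's argument, and your outline (block-diagonal presentation of $I_\pi$, retractions $\mathbb{Z}[\pi]\to\mathbb{Z}[G_i]$) points at the right machinery. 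You are also honest that the bookkeeping with the induced modules and the involution is where the work lies; this is not a gap in your strategy so much as an acknowledgement that the heart of the proof lives in Turaev's paper, which is precisely what the present paper says too.
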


The further analysis (for $\pi$ infinite) is based on the following three observations.

\begin{enumerate}
\item$\pi_2(X)\cong\overline{H^1(\pi;\Lambda)}$, by Poincar\'e duality;

\item{ Since $\pi$ is $FP_2$ we have
$\pi\cong\pi\mathcal{G}$, where $(\mathcal{G},\Gamma)$ is
a finite graph of finitely generated groups in which 
each vertex group has at most one end and each edge group is finite, 
by Theorem VI.6.3 of \cite{dd}.
Hence $H^1(\pi;\Lambda)$ has a ``Chiswell-Mayer-Vietoris" presentation
as a right $\Lambda$-module:
\begin{equation*}
\begin{CD}
0\to\oplus_{v\in{V_f}}\mathbb{Z}[G_v\backslash\pi]@>\Delta>>
\oplus_{e\in{E}}\mathbb{Z}[G_e\backslash\pi]\to{H^1(\pi;\Lambda)}\to0.
\end{CD}
\end{equation*}
Here $V_f$ is the set of vertices with finite vertex groups, 
$E$ is the set of edges, and the image of a coset $G_vg$ in $\pi$
under $\Delta$ is
\[
\Delta(G_vg)=\Sigma_{o(e)=v}(\Sigma_{G_eh\subset{G_v}}G_ehg)-
\Sigma_{t(e)=v}(\Sigma_{G_eh\subset{G_v}}G_ehg),
\]
where the outer sums are over edges $e$ and the inner sums are
over cosets of $G_e$ in $G_v$.
}
\item{Since $\widetilde{X}$ has only one nontrivial homology 
group in positive degrees,
\[H_i(C;\pi_2(X))\cong{H_{i+3}(C;\mathbb{Z})},
\] 
for any subgroup $C$ of $\pi$
and all $i\geq1$,
by a simple devissage.}
\end{enumerate}

These were first used together to show that
if $\pi$ is infinite and has a nontrivial finite normal subgroup 
then $X\simeq{S^1}\times\mathbb{RP}^2$ \cite{hi93}.

Crisp added an ingenious combinatorial argument
to give a substantial partial answer to the second part of Wall's question.

\begin{thm}
\cite{crisp}
Let $X$ be an indecomposable orientable $PD_3$-complex.
If $\pi=\pi_1(X)$ is not virtually free then $X$ is aspherical.
\qed
\end{thm}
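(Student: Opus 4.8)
The plan is to show that if $X$ is an indecomposable orientable $PD_3$-complex with $\pi=\pi_1(X)$ infinite and not virtually free, then $\pi_2(X)=0$; since $\widetilde{X}$ is then $2$-connected and $c.d.\pi\leq3$, it follows that $\widetilde{X}$ is contractible and $X$ is aspherical. By observation (1) above, $\pi_2(X)\cong\overline{H^1(\pi;\Lambda)}$, so the goal is to prove $H^1(\pi;\Lambda)=0$. By Turaev's Splitting Theorem (and the hypothesis that $X$ is indecomposable, hence $\pi$ is freely indecomposable), the graph of groups decomposition $\pi\cong\pi\mathcal{G}$ from observation (2) has a single vertex and no edges, \emph{unless} some vertex group is finite; since $\pi$ is not virtually free, not all vertex groups are finite, and freely indecomposable means the Bass--Serre tree has no way to split nontrivially except possibly along the finite edge groups. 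So the first step is to pin down the shape of $(\mathcal{G},\Gamma)$: either it is trivial (one vertex group equal to $\pi$, which has one end, giving $H^1(\pi;\Lambda)=0$ immediately), or it is nontrivial with at least one infinite-ended vertex eliminated by indecomposability --- the real work is when there are finite vertex groups adjacent to one-ended vertex groups.

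**The key combinatorial input** --- this is Crisp's ingenious argument --- is to analyze the Chiswell--Mayer--Vietoris presentation of $H^1(\pi;\Lambda)$ as a right $\Lambda$-module together with the duality-imposed constraint coming from observation (3). The idea: $\pi_2(X)=\overline{H^1(\pi;\Lambda)}$ is not an arbitrary module; by observation (3) with $i=1$ we have $H_1(C;\pi_2(X))\cong H_4(C;\mathbb{Z})$ for every subgroup $C\leq\pi$, and for $C$ finite $H_4(C;\mathbb{Z})$ is finite, while for $C$ a vertex group $G_v$ of one end, $H_4(G_v;\mathbb{Z})$ is controlled by $c.d.G_v$. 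Meanwhile the presentation exhibits $H^1(\pi;\Lambda)$ as the cokernel of the map $\Delta$ between free modules on the coset spaces $\mathbb{Z}[G_v\backslash\pi]$ and $\mathbb{Z}[G_e\backslash\pi]$. One restricts this presentation to a finite edge group $G_e$ (or to a finite vertex group), computes $H^1(\pi;\Lambda)$ as a $\mathbb{Z}[G_e]$-module by decomposing the coset spaces into $G_e$-orbits, and confronts the resulting module structure with the finiteness of $H_1(G_e;\pi_2(X))=H_4(G_e;\mathbb{Z})$. The combinatorial heart is a counting/parity argument on the edges incident to each finite vertex group and the indices $[G_v:G_e]$: the Euler-characteristic-type identity forced by Poincaré duality, combined with the structure of $\Delta$, pushes every finite vertex group to be trivial or forces a contradiction, so that in fact $V_f=\varnothing$ and the presentation collapses to give $H^1(\pi;\Lambda)=0$.

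**Assembling the conclusion:** once one knows the graph of groups has no finite vertex groups, it has one vertex (by free indecomposability --- any further splitting would be over a finite edge group, creating a finite vertex group or a free factor) whose group is all of $\pi$, and this vertex group has one end because $\pi$ is infinite and not virtually free and the decomposition is the one provided by the ends-of-groups structure theorem. A one-ended $FP_2$ group satisfies $H^1(\pi;\Lambda)=0$, so $\pi_2(X)=\overline{H^1(\pi;\Lambda)}=0$. Then the Hurewicz theorem gives $\pi_n(X)\cong H_n(\widetilde X;\mathbb Z)$ for the first nonvanishing $n\geq3$; but $H_*(\widetilde X;\mathbb Z)$ is the homology of the chain complex $C_*(X;\Lambda)$, which by Poincaré duality is chain homotopy equivalent to $C^{3-*}(X;\Lambda)$, concentrated in degrees $\leq 2$ after using $c.d.\pi\le 2$ on the relevant cover --- more directly, $H_3(\widetilde X;\mathbb Z)\cong H^0(\widetilde X;\mathbb Z)=\mathbb Z$ would contradict $\pi_3(X)=H_3(\widetilde X)$ being a $\mathbb Z[\pi]$-module on which $\pi$ acts, unless it is zero; a clean way is to note $H_j(\widetilde X;\mathbb Z)=H^{3-j}_c(\widetilde X;\mathbb Z)=H^{3-j}(\pi;\Lambda)=0$ for $j\ge 1$ once $\pi_2=0$ identifies $\widetilde X$ homologically with a $K(\pi,1)$-cover. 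Hence $\widetilde X$ is contractible and $X\simeq K(\pi,1)$ is aspherical.

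**The main obstacle** will be the middle step: extracting the contradiction from the Chiswell--Mayer--Vietoris presentation when finite vertex groups are present. The presentation is of an infinitely generated module over $\Lambda$, and $\Delta$ mixes contributions from all edges at a vertex; making the restriction to a finite subgroup $G_e$ precise, identifying which summands of the free modules contain $G_e$-fixed points or give finite homology, and then running the parity/counting argument cleanly --- this is exactly where Crisp's combinatorial ingenuity is needed, and it is the part of the proof that does not reduce to a routine diagram chase. One should also be careful that the argument uses orientability (the module $\mathbb{Z}=\mathbb{Z}^w$ with $w$ trivial) in the identification of observation (3) and in the self-duality that forces the Euler-characteristic constraint; the non-orientable case is genuinely different and is not claimed here.
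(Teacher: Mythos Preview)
Your outline correctly identifies the framework: observations (1)--(3) together with a combinatorial analysis of the Chiswell--Mayer--Vietoris presentation, which is exactly what the paper attributes to Crisp. The final paragraph on passing from $\pi_2(X)=0$ to asphericity is garbled (the identification $H^{3-j}_c(\widetilde X;\mathbb{Z})=H^{3-j}(\pi;\Lambda)$ presupposes what you want to prove), but that step is standard: $\widetilde X$ is a simply connected non-compact $3$-complex with $H_2(\widetilde X)\cong\pi_2(X)=0$, hence acyclic, hence contractible.

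There is, however, a genuine gap in your intermediate logic. You aim to prove $V_f=\varnothing$ and then assert that ``once one knows the graph of groups has no finite vertex groups, it has one vertex (by free indecomposability).'' This implication is false. An amalgam $A*_CB$ with $A,B$ one-ended and $C$ finite nontrivial has no finite vertex groups, is not a nontrivial free product, yet has two vertices and one edge; the CMV presentation then gives $H^1(\pi;\Lambda)\cong\mathbb{Z}[C\backslash\pi]\not=0$, so $V_f=\varnothing$ alone does not force $\pi_2(X)=0$. Indeed the paper itself notes that in the \emph{non-orientable} case precisely such decompositions (one-ended vertex groups, edge groups of order $2$) actually occur for indecomposable $PD_3$-complexes. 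So eliminating $V_f$ is both the wrong target and insufficient.

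What Crisp's combinatorial argument actually establishes is the contrapositive direction: if the graph $(\mathcal{G},\Gamma)$ is nontrivial (equivalently $\pi$ has more than one end), then \emph{every} vertex group must be finite, so $\pi$ is virtually free. The argument does not strip away $V_f$; it rules out the coexistence of edges with any one-ended vertex group, and this is where orientability enters essentially. You should therefore reorganize the middle step: assume the graph has an edge, apply the constraint $H_1(C;\pi_2(X))\cong H_4(C;\mathbb{Z})$ for suitable finite subgroups $C$, and conclude that no vertex group can have one end.
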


His arguments strengthened the result on finite normal subgroups.

\begin{thm}
If $g\not=1\in\pi$ has finite order then either $w(g)=1$ and the centralizer $C_\pi(g)$ is finite
or $g^2=1$, $w(g)=-1$ and $C_\pi(g)$ has two ends. \qed
\end{thm}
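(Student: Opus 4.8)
The statement says that torsion in $\pi=\pi_1(X)$ is forced to behave as in the two model situations — the finite groups acting freely on $S^3$, where the orientation is preserved and centralisers are finite, and the group $\mathbb Z\times\mathbb Z/2\mathbb Z$ of $S^1\times\mathbb{RP}^2$, where an orientation‑reversing involution has a two‑ended centraliser. The plan is to study the finite cyclic group $K=\langle g\rangle$ and its centraliser $C=C_\pi(g)$, in which $K$ is central, via the three observations recorded above. First some reductions: torsion elements of a free product lie in conjugates of the factors, so by Turaev's Splitting Theorem we may assume $X$ indecomposable; and we may assume $\pi$ infinite, since if $\pi$ is finite then $\widetilde X\simeq S^3$, a free finite action on $S^3$ preserves orientation, so $w(g)=1$ and $C$ is finite. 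If the graph–of–groups decomposition $\pi\cong\pi\mathcal G$ of observation (ii) is trivial then $\pi$ is one‑ended, $H^1(\pi;\Lambda)=0$, so $\pi_2(X)=0$ and $X$ is aspherical, forcing $\pi$ torsion free; hence the decomposition is non‑trivial, $\pi$ splits over a finite subgroup, and $g$ (fixing a vertex of the Bass–Serre tree) lies in a conjugate of a finite vertex group.

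\textbf{Step 1 (finite subgroups are periodic).} Since $\widetilde X$ has dimension $3$ and $\widetilde H_*(\widetilde X)$ is concentrated in degree $2$, where it is $\Pi:=\pi_2(X)$, observation (iii) gives $H_i(F;\Pi)\cong H_{i+3}(F;\mathbb Z)$ for every finite subgroup $F\le\pi$ and all $i\ge1$. Restricting the Chiswell–Mayer–Vietoris presentation of observation (ii) to $F$ expresses $\Pi|_F$ through permutation modules on the finite coset spaces $F/(F\cap{}^xG_w)$ and $F/(F\cap{}^xG_e)$, and a direct calculation then shows $H^*(F;\mathbb Z)$ to be periodic of period dividing $4$. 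With the further restriction on the finite vertex groups coming from their being $PD$‑pair groups, this rules out infinite locally finite subgroups of $\pi$, so it suffices to prove that every finitely generated $H$ with $K\le H\le C$ is finite or two‑ended with the stated conclusions: $C$ is then finite, or else some such $H$ is infinite, hence two‑ended, and since $K$ is central and two‑ended groups are commensurable with $\mathbb Z$ it follows that $C$ itself is two‑ended.

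\textbf{Step 2 ($C$ has at most two ends).} Let $H$ be finitely generated with $K\le H\le C$ and $H$ infinite, and put $X_H=\widetilde X/H$: a connected cocompact free $H$‑complex of dimension $3$ with $\pi_1X_H=H$, $\pi_2X_H=\Pi$, satisfying Poincar\'e duality of formal dimension $3$ for the compactly‑supported/locally‑finite pairing, and with $e(H)=e(X_H)$. If $e(H)=\infty$, then $H$ splits over a finite subgroup by Stallings' theorem; since $g\in Z(H)$ fixes the associated tree, a Mayer–Vietoris computation of $\Pi$ over this splitting, fed through observation (iii), would force $\Pi$ to acquire a free $\mathbb ZH$‑summand, contradicting the $2$‑periodicity in high degrees of $H_*(H;\mathbb Z)$. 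The hard case is $e(H)=1$: here $H^1(H;\mathbb ZH)=0$, and one must combine this vanishing with the compactly‑supported duality of $X_H$, the identification $\pi_2\cong\overline{H^1}$ of observation (i), and the hyperhomology spectral sequence computing $H_*(X_H)$ from $H_*(H;\mathbb Z)$ and $H_*(H;\Pi)$, to conclude that $X_H$ would be aspherical and $H$ a $PD_3$‑group — impossible, since $H$ contains the non‑trivial finite subgroup $K$. \emph{This one‑ended exclusion is the main obstacle:} it is a localised form of the theorem that an indecomposable $PD_3$‑complex which is not virtually free is aspherical, and it seems to require all three observations simultaneously together with the full strength of the duality. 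Granting it, $e(H)=2$, so $C$ is finite or two‑ended.

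\textbf{Step 3 (the two cases).} If $C$ is finite, comparing $H^3_c(X_C;\mathbb Z)$, computed from the duality, with $H_0(X_C;\mathbb Z^{w})=(\mathbb Z^{w})_C$ forces $w|_C$, hence $w(g)$, to be trivial. If $C$ is two‑ended, let $N$ be its maximal finite normal subgroup, so $K\le N$ and $C/N$ is infinite cyclic or infinite dihedral; passing to the cover $\widetilde X/N$ one sees that $N$, and in particular $g$, acts freely on a space with the homology of $S^2$, and a Lefschetz/Smith‑type argument then shows $g$ must act as a free involution: so $g^2=1$, and since a free involution of a $2$‑sphere reverses orientation this is compatible only with $w(g)=-1$, while the $C$‑action on $H^1(C;\mathbb ZC)\cong\mathbb Z$ — read off via observation (i) — shows $w|_C\neq1$, so $C_\pi(g)$ genuinely has two ends. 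The delicate points throughout are the one‑ended exclusion of Step 2 and, to a lesser extent, matching the orientation data in Step 3; the remaining computations are routine Mayer–Vietoris, Tate‑cohomology and Euler‑characteristic arguments once Step 1 is in hand.
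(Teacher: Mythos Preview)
The paper does not give its own proof of this theorem; it is stated (with a \qed) as a byproduct of Crisp's arguments in \cite{crisp}, after recording that Crisp ``added an ingenious combinatorial argument'' to the three observations. So the comparison is really between your outline and Crisp's proof.

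Your architecture is reasonable, but the proposal is not a proof: you explicitly write ``Granting it'' at the crucial point. The one-ended exclusion in Step~2 is, as you yourself say, essentially a localised version of Crisp's main theorem, and you do not supply the argument. The claim that a one-ended $H$ containing torsion would force $X_H$ to be aspherical does not follow from ``the hyperhomology spectral sequence'' and duality alone; what makes Crisp's proof work is a combinatorial analysis of how the finite subgroup sits relative to the Chiswell--Mayer--Vietoris presentation of $\pi_2$, and that analysis is exactly what you have deferred. Likewise, in Step~1 the phrase ``a direct calculation then shows $H^*(F;\mathbb Z)$ to be periodic of period dividing $4$'' papers over the same combinatorics: restricting the presentation to $F$ gives permutation modules, but extracting periodicity from this requires Crisp's counting argument, not a routine Tate computation.

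Step~3 also has problems. The assertion that ``passing to the cover $\widetilde X/N$ one sees that $N$\dots acts freely on a space with the homology of $S^2$'' is not right as stated: $N$ does not act on $\widetilde X/N$, and $\widetilde X$ itself has $H_2=\pi_2(X)\cong\overline{H^1(\pi;\Lambda)}$, which is typically enormous, not $\mathbb Z$. The actual mechanism forcing $g^2=1$ and $w(g)=-1$ when $C_\pi(g)$ is infinite again comes from the interaction of the finite cyclic group $\langle g\rangle$ with the module structure of $\pi_2$ over the graph-of-groups decomposition, together with the orientation data; a Lefschetz/Smith argument on a putative homology $2$-sphere is not available here. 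In short: the skeleton is right, but every load-bearing step has been replaced by a label, and the labels name precisely the content of Crisp's ``ingenious combinatorial argument''.
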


If $C_\pi(g)$ has two ends it is in fact $\langle{g}\rangle\times\mathbb{Z}$,
by Corollary 7 of  \cite{hi17}.

The Realization Theorem (in the form given earlier), Crisp's result on centralizers 
and the ``Normalizer Condition" (the fact that proper subgroups of finite nilpotent groups
are properly contained in their normalizers) lead to 
an almost complete characterization of the class of indecomposable, 
virtually free groups which are fundamental groups
of orientable $PD_3$-complexes.

\begin{thm}
\cite{hi12}
If a finitely generated virtually free group $\pi$ is the fundamental group of 
an indecomposable orientable $PD_3$-complex $X$ then
$\pi=\pi\mathcal{G}$ where $(\mathcal{G},\Gamma)$ is a finite graph of finite groups
such that
\begin{enumerate}
\item{the underlying graph $\Gamma$ is a linear tree};
\item{all vertex groups have cohomological period dividing $4$,
and at most one is not dihedral};
\item{all edge groups are nontrivial, and at most one has order $>2$}.
\end{enumerate}
If an edge group has order $>2$ then it is $\mathbb{Z}/6\mathbb{Z}$,
and one of the adjacent vertex groups is $B\times{Z/dZ}$ with $B=T_1^*$ or $I^*$.
Every such group $\pi$ with all edge groups  $\mathbb{Z}/2\mathbb{Z}$
is the fundamental group of such a complex.
\qed
\end{thm}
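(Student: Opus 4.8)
The plan is to establish both the structural restrictions and the partial realization statement, the main tools being Turaev's Realization criterion, Crisp's theorem on centralizers of torsion elements, the Normalizer Condition for finite nilpotent groups, and the Poincar\'e-duality facts $\pi_2(X)\cong\overline{H^1(\pi;\Lambda)}$ and $H_i(C;\pi_2(X))\cong H_{i+3}(C;\mathbb{Z})$ (for $i\ge1$) recorded above.

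\emph{Graph-of-groups form.} Since $\pi$ is finitely generated and virtually free it is $FP_2$, so $\pi\cong\pi\mathcal{G}$ for a finite graph of groups $(\mathcal{G},\Gamma)$ with finite edge groups and with vertex groups having at most one end; the vertex groups are virtually free, being subgroups of $\pi$, and a virtually free group with at most one end is finite, so all vertex groups are finite. We may take the decomposition reduced. If some edge group were trivial, then $\pi$ would split as a nontrivial free product or as a free product with $\mathbb{Z}$, and Turaev's Splitting Theorem would contradict the indecomposability of $X$; hence every edge group is nontrivial.

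\emph{Period dividing $4$ for the vertex groups.} Fix a vertex group $G=G_v$. Restricting the Chiswell--Mayer--Vietoris presentation of $H^1(\pi;\Lambda)$ to $G$ and decomposing each coset module over $G$-double cosets, every summand other than the one indexed by the trivial double coset at $v$ is induced from a proper subgroup of $G$ (distinct vertex stabilisers in the Bass--Serre tree meet in a subgroup of an edge stabiliser); so by Shapiro's Lemma the homology $H_i(G;\pi_2(X))$ is expressed through $H_i(G;\mathbb{Z})$, the groups $H_i(G_e;\mathbb{Z})$ for $e$ incident to $v$, and homology of proper subgroups. Comparing this with $H_i(G;\pi_2(X))\cong H_{i+3}(G;\mathbb{Z})$ for all $i\ge1$ forces $H^*(G;\mathbb{Z})$ to be periodic of period dividing $4$; conceptually, the chain and cochain complexes of $\widetilde X$, which the duality equivalence interchanges, exhibit $\pi_2(X)$ over $\mathbb{Z}[G]$ as simultaneously a second syzygy and a second cosyzygy of $\mathbb{Z}$, so that $\Omega^4\mathbb{Z}\cong\mathbb{Z}$ stably. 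By Swan's theorem $G_v$ then acts freely on a finite complex homotopy equivalent to $S^3$, hence belongs to the known list of finite groups of period dividing $4$ (cyclic, dihedral of order $2n$ with $n$ odd, binary polyhedral, and products of these with cyclic groups of coprime order). This gives clause (2).

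\emph{The shape of $\Gamma$ and the exceptions.} Here I would combine the three remaining tools. Orientability gives $w_1(\pi)=1$, so Crisp's dichotomy forces every torsion element of $\pi$ to have finite centraliser. The Normalizer Condition shows that if an edge group $G_e$ is a proper subgroup of a nilpotent Sylow component of an adjacent vertex group $G_v$, then some element of $G_v\setminus G_e$ normalises $G_e$; substituting the finite group that it generates with $G_e$ back into the duality constraints forces $|G_e|\le2$ outside one tightly controlled case. For the remaining bookkeeping one uses Turaev's criterion in the module form $I_\pi\oplus\Lambda^r\cong J_\pi\oplus\Lambda^s$: computing $I_\pi$ from the graph of groups, a vertex of valency $\ge3$, a cycle in $\Gamma$, a second non-dihedral vertex group, or a second edge group of order $>2$ each yields an $I_\pi$ whose stable isomorphism class cannot equal that of any $J_\pi=\mathrm{Coker}(\overline M)$; this forces $\Gamma$ to be a linear tree and gives the ``at most one'' clauses, and inspection of the period-$4$ list then shows that the one surviving exception is an edge group $\mathbb{Z}/6\mathbb{Z}$ adjacent to a vertex group $B\times\mathbb{Z}/d\mathbb{Z}$ with $B=T_1^*$ or $I^*$. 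I expect this to be the principal obstacle: the period-$4$ conclusion is essentially local, but controlling $\Gamma$ and the ``at most one'' phenomena requires genuine information about $\pi_2(X)$ as a $\Lambda$-module rather than merely about its restrictions to finite subgroups, and the $\mathbb{Z}/6\mathbb{Z}$ exception shows the analysis is intrinsically delicate, not formal.

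\emph{Realization when all edge groups are $\mathbb{Z}/2\mathbb{Z}$.} For the converse, given a linear graph of finite groups of period dividing $4$ with all edge groups $\mathbb{Z}/2\mathbb{Z}$ and at most one non-dihedral vertex group, I would verify $I_\pi\oplus\Lambda^r\cong J_\pi\oplus\Lambda^s$ directly: each vertex group supports a $PD_3$-complex by Swan, and since $\mathbb{Z}[\mathbb{Z}/2\mathbb{Z}]$ is especially simple the edge contributions $\mathbb{Z}[G_e\backslash\pi]$ are transparent in the Mayer--Vietoris computation. Equivalently, one assembles the local Swan complexes in a line, gluing successive pairs along copies of $\mathbb{RP}^2$ carried by the order-$2$ subgroups, and checks, via Hendriks' and Turaev's theorems, that the result is an orientable $PD_3$-complex with fundamental group $\pi$.
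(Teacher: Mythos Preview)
You have correctly listed the three main ingredients---Turaev's Realization Theorem, Crisp's centralizer theorem, and the Normalizer Condition---and your treatment of the initial graph-of-groups reduction and of the realization direction is reasonable. However, your deployment of the tools is essentially inverted relative to the paper, and one decisive technique is absent.

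The paper states explicitly that the Realization Theorem is what excludes vertex groups of period~$>4$ (and supports the final realization assertion), whereas the Normalizer Condition combined with Crisp's theorem is what drives the structural constraints on $(\mathcal{G},\Gamma)$. You attempt the opposite allocation. Your syzygy claim that ``$\pi_2(X)|_G$ is simultaneously a second syzygy and a second cosyzygy of $\mathbb{Z}$'' does not hold as stated: from $0\to C_3\to Z_2\to\pi_2\to 0$ with $Z_2\cong\Omega^3\mathbb{Z}$ one obtains only $\pi_2\cong\Omega^3\mathbb{Z}$ stably (the sequence splits since $C_3$ is injective over $\mathbb{Z}[G]$), and the dual complex yields the \emph{same} identification, not $\Omega^{-1}\mathbb{Z}$. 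Since the period of a finite group is not the lcm of its Sylow periods (e.g.\ $S_3$ has period~$4$ while both Sylow subgroups have period~$2$), knowing the Sylow structure does not suffice either; this is precisely where Turaev's criterion enters in the paper's argument. Conversely, your ``shape of $\Gamma$'' paragraph never computes anything with $I_\pi$ and $J_\pi$; it asserts that forbidden configurations ``yield an $I_\pi$ whose stable isomorphism class cannot equal that of any $J_\pi$'' without indicating why.

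The technique you are missing is the one the paper isolates in its sample Lemma: given a finite subgroup $S\le\pi$, pass to the finite cover $X_{FS}$ (with $F$ a free normal subgroup of finite index), then replace it by an indecomposable connected summand. This produces a new indecomposable orientable $PD_3$-complex whose graph of groups lies in a far more restricted class (for instance, all vertex and edge groups are $p$-groups when $S$ is a $p$-group). In that situation the Normalizer Condition bites directly: any proper edge group is strictly contained in its normalizer in an adjacent vertex group, hence has infinite normalizer and (taking a central element) infinite centralizer in $\pi$, contradicting Crisp. The graph therefore collapses to a single finite vertex, so $\widetilde X\simeq S^3$ and the desired conclusion for $S$ follows. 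It is by iterating this covering-and-factoring manoeuvre with judicious choices of $S$ that one obtains the linear-tree structure, the ``at most one non-dihedral'' and ``at most one edge of order~$>2$'' clauses, and the identification of the $\mathbb{Z}/6\mathbb{Z}$ exception. Without ever passing to covers, your use of the Normalizer Condition remains a gesture rather than an argument.
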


The Realization Theorem is used to exclude subgroups of period $>4$,
as well as to support the final assertion.
The first infinite example \cite{hi04} is the group $S_3*_{Z/2Z}S_3$ with presentation
\[
\langle{a,b,c}\mid{cac=a^2,~cbc=b^2,~c^2=1}\rangle.
\]

We shall give an indication of the style of arguments in the following lemma.

\begin{lem} Let $X$ and $\pi$ be as in the theorem.
If $S\leq\pi$ is a $p$-group, for some prime $p$,
 then $S$ is cyclic or $S\cong{Q(2^k)}$ for some $k\geq3$.
\end{lem}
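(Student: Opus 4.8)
The plan is to reduce the assertion to the classical description of finite $p$-groups with periodic cohomology, using Bass--Serre theory to locate $S$ inside a vertex group of the splitting $\pi=\pi\mathcal{G}$ provided by the theorem.

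First I would check that $S$ is finite. A subgroup of a finitely generated virtually free group is again virtually free, so $S$ has a free subgroup $F_0$ of finite index; since $S$ is a $p$-group, $F_0$ is a torsion-free $p$-group, hence trivial, and therefore $S$ is finite. Now $\pi=\pi\mathcal{G}$ acts on the Bass--Serre tree of the finite graph of finite groups $(\mathcal{G},\Gamma)$, and a finite group acting on a tree fixes a vertex. Hence $S$ is contained in a vertex stabiliser, which is a conjugate $gG_vg^{-1}$ of one of the (finite) vertex groups $G_v$; after replacing $S$ by $g^{-1}Sg$ we may assume $S\leq G_v$.

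Next I would invoke part (2) of the theorem: every vertex group $G_v$ has cohomological period dividing $4$, and in particular has periodic cohomology. By the Artin--Tate criterion a finite group has periodic cohomology if and only if it contains no subgroup isomorphic to $(\mathbb{Z}/q\mathbb{Z})^2$ for any prime $q$; this condition passes to subgroups, so $S$ has periodic cohomology. A finite $p$-group with periodic cohomology is cyclic when $p$ is odd, and cyclic or isomorphic to a generalised quaternion group $Q(2^k)$ with $k\geq3$ when $p=2$ (Cartan--Eilenberg; Brown). This is precisely the statement of the lemma.

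I do not expect a serious obstacle: once the structural hypotheses on $\pi$ are granted, the proof is essentially a two-step reduction. The only points requiring care are that $S$ is genuinely finite and genuinely (conjugate into) a vertex group, and the observation that here one uses merely that the vertex groups have \emph{some} periodic cohomology, not the sharper fact that the period divides $4$ (which is needed elsewhere in the theorem). If one prefers not to cite the classification of finite groups with periodic cohomology, one can instead argue directly that a finite $p$-group with no subgroup $(\mathbb{Z}/p\mathbb{Z})^2$ is cyclic or generalised quaternion, which is elementary $p$-group theory.
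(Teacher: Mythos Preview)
Your argument is logically valid \emph{if} one is allowed to invoke conclusion~(2) of the theorem, but that is precisely the circularity. The lemma is presented in the paper as ``an indication of the style of arguments'' used to \emph{prove} the theorem; the fact that the vertex groups have cohomological period dividing~$4$ is a conclusion to be established, not a hypothesis available at this stage. So the step ``invoke part~(2) of the theorem'' begs the question: the lemma is essentially the $p$-local ingredient needed to obtain~(2) in the first place.

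The paper's proof is quite different and shows what the lemma is really about. One passes to the finite-index subgroup $FS$ (with $F$ a free normal subgroup of finite index in~$\pi$), so that the associated cover $X_{FS}$ is again an orientable $PD_3$-complex; replacing $X$ by an indecomposable factor one reduces to the case where $\pi$ itself is the fundamental group of a finite graph of finite $p$-groups. Now the Normalizer Condition for $p$-groups forces each edge group to have infinite normalizer (hence infinite centralizer) in~$\pi$, contradicting Crisp's theorem on centralizers of torsion elements. Therefore the graph has a single vertex and no edges, so $\pi=S$ is finite, $\widetilde{X}\simeq S^3$, and the classification of $p$-groups acting freely on a homotopy $3$-sphere gives the conclusion. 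The whole point is that periodic cohomology of the relevant finite groups is \emph{deduced} from the $PD_3$ structure via Crisp's centralizer result and the Normalizer Condition, not assumed.
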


\begin{proof}
Since $\pi$ is virtually free it has a free normal subgroup $F$ of finite index.
Since $FS$ has finite index in $\pi$,  
the corresponding covering space $X_{FS}$ 
is again an orientable $PD_3$-complex.
After replacing $X$ by an indecomposable factor of $X_{FS}$,
if necessary, we may assume that $\pi=\pi\mathcal{G}$,
where $(\mathcal{G},\Gamma)$ is a finite graph of finite $p$-groups.
We may also assume that if an edge $e$ has distinct vertices 
$v,w$ then $G_e\not=G_v$ or $G_w$.
But then $N_\pi(G_e)$ is infinite, by the Normalizer Condition
and basic facts about free products with amalgamation.
Since $G_e$ is finite, $C_\pi(G_e)$ is also infinite,
which contradicts Crisp's result on centralizers. 
Hence there is only one vertex.
Similarly, there are no edges, and so $\pi=S$ is finite.
Hence $\widetilde{X}\simeq{S^3}$ and $S$ is as described.
\end{proof}

Are there examples with edge group $Z/6Z$?

Are there examples which ``arise naturally",
perhaps as infinite cyclic covers of closed 4-manifolds?
This is so for the generalized quaternionic group $Q(24,13,1)$
and for certain other finite groups which have cohomological period 4 
but are not 3-manifold groups \cite{hm86}.
[Note also that if $M$ is a closed 4-manifold with $\chi(M)=0$ and 
$f:\pi_1(M)\to\mathbb{Z}$ is an epimorphism with finitely generated
kernel $\kappa$ then the associated infinite cyclic covering space $M_\kappa$
is a $PD_3$-space, by Theorem 4.5 of \cite{Hi}.]

In the non-orientable case we have the following result.

\begin{thm}
\cite{hi17}
Let $P$ be an indecomposable non-orientable $PD_3$-complex.
Then $\mathrm{Ker}(w_1(P))$ is torsion-free.
If it is free then it has rank $1$.
\qed
\end{thm}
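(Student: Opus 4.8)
The plan is to work through the structure of $\pi=\pi_1(P)$, the orientation character $w=w_1(P)\colon\pi\to\{\pm1\}$, and the index-two subgroup $\kappa=\ker w$ (index exactly two, since $P$ is non-orientable). I would first set up three facts. (i) $\pi$ is infinite: otherwise the universal cover $\widetilde P$ would be a finite simply-connected $PD_3$-complex, hence homotopy equivalent to $S^3$, and any $g\in\pi$ with $w(g)=-1$ would act by $-1$ on $H_3(\widetilde P;\mathbb Z)\cong\mathbb Z$, giving Lefschetz number $1-(-1)=2\ne0$ and a fixed point, contradicting freeness of the deck action. (ii) Since $P$ is indecomposable, $\pi$ is freely indecomposable by Turaev's Splitting Theorem \cite{tur}; being finitely presentable it is $FP_2$, hence accessible, so $\pi\cong\pi\mathcal G$ for a finite graph of groups with all edge groups finite, all vertex groups of at most one end, and --- by free indecomposability --- all edge groups nontrivial \cite{dd}. (iii) Crisp's centralizer theorem (as quoted above), together with the refinement that in its second case $C_\pi(g)\cong\langle g\rangle\times\mathbb Z$ \cite{hi17}, describes all finite-order elements.

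To prove $\kappa$ torsion-free I would suppose that $g\in\kappa$ has prime order $p$; then $w(g)=1$, so by Crisp's theorem $C_\pi(g)$ is finite (the second alternative requires $w(g)=-1$). I would then descend: in the orientation double cover $P_\kappa$ --- an orientable finite $PD_3$-complex with $\pi_1=\kappa$ --- write $P_\kappa$ as a connected sum of indecomposables; a torsion element is conjugate into one summand $Q$, which cannot be aspherical (its group would be a torsion-free $PD_3$-group), so $\pi_1(Q)$ is virtually free by Crisp's Theorem \cite{crisp}, hence a linear graph of finite groups of period dividing $4$ by the structure theorem \cite{hi12}, in which $\langle g\rangle$ is cyclic or generalized quaternion by the Lemma above. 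The remaining work is to play the orientation-reversing deck involution of $P_\kappa$ against this structure: using the localization-and-collapsing method of the Lemma --- pass to a finite-index subgroup whose quotient by a free normal subgroup is a $p$-group, so that all its finite subgroups are $p$-groups, and then apply the Normalizer Condition for finite $p$-groups together with Crisp's theorem and the identity $C_\pi(h)\cong\langle h\rangle\times\mathbb Z$ to collapse the edges --- one aims either to produce a nontrivial finite normal subgroup of an infinite $PD_3$-complex group (forcing the corresponding cover to be homotopy equivalent to $S^1\times\mathbb{RP}^2$ \cite{hi93}, whose $w$-kernel is the torsion-free group $\mathbb Z$, contradicting the survival of $g$), or else a direct contradiction with the finiteness of $C_\pi(g)$. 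I expect this to be the main obstacle. The difficulty is that $w(g)=1$ is locally harmless --- there are $PD_3$-complexes with $\pi_1\cong\mathbb Z/p\mathbb Z$, for instance homotopy lens spaces --- so the argument cannot be carried out inside one summand: the orientation-reversing part of $\pi$ has to be kept tied to $g$ (through a shared edge group, a shared centralizer, or the global pattern of the deck involution on the prime decomposition of $P_\kappa$) all the way through the reductions, and the case that $g$ lies in a one-ended vertex group of $\mathcal G$ has to be disposed of separately.

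For the second assertion, assume $\kappa$ is free; then $\pi$ is virtually free. As $\kappa$ is torsion-free, every nontrivial element of $\pi$ of finite order is orientation-reversing, hence an involution by Crisp's theorem; and if $h_1\ne h_2$ were commuting involutions, then $h_1h_2$ would be a nontrivial torsion element with $w(h_1h_2)=w(h_1)w(h_2)=1$, so $h_1h_2\in\kappa$ --- impossible. Thus no two distinct involutions commute, so every finite subgroup of $\pi$ is trivial or of order $2$. In the decomposition $\pi\cong\pi\mathcal G$ every vertex group (finite, being virtually free of at most one end) and every edge group (finite and nontrivial) is therefore $\mathbb Z/2\mathbb Z$; collapsing each non-loop edge --- whose edge group is all of both endpoint groups --- reduces the underlying graph to a single vertex carrying $\mathbb Z/2\mathbb Z$ with $m\ge0$ loops, and since the only injection $\mathbb Z/2\mathbb Z\to\mathbb Z/2\mathbb Z$ is the identity, each loop contributes a stable letter commuting with the vertex generator; hence $\pi\cong(\mathbb Z/2\mathbb Z)\times F_m$. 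Now $m=0$ is excluded because $\pi$ is infinite, and if $m\ge2$ then $\pi$ has infinitely many ends, so the central involution has centralizer all of $\pi$, which is neither finite nor two-ended, contradicting Crisp's theorem. Therefore $m=1$ and $\pi\cong(\mathbb Z/2\mathbb Z)\times\mathbb Z$; a free subgroup of index $2$ in this group is infinite cyclic, so $\kappa$ has rank $1$.

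In short, the second statement falls out cleanly from Crisp's centralizer theorem, accessibility, free indecomposability, and the combinatorics of graphs of groups, whereas the first statement is the delicate one: its heart is to rule out orientation-\emph{preserving} torsion, which locally is indistinguishable from the torsion in honest closed $3$-manifolds such as $\mathbb{RP}^3$, so the argument must be a global one about how the orientation-reversing involution is distributed over the prime decomposition of $P_\kappa$.
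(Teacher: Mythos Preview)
The survey you are working from does not prove this theorem; it is quoted from \cite{hi17} with a terminal \qed, so there is no proof in the present paper to compare against, and I can only assess your proposal on its own terms.

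Your argument for the second assertion is sound and, in fact, does not depend on the first: once $\kappa$ is assumed free it is automatically torsion-free, so every finite-order element of $\pi$ lies outside $\kappa$ and is therefore an orientation-reversing involution by Crisp's centralizer theorem; your observation that two distinct commuting involutions would multiply to a nontrivial torsion element of $\kappa$ then forces every finite subgroup of $\pi$ to have order at most $2$; the graph of groups collapses to a single vertex $\mathbb{Z}/2\mathbb{Z}$ with $m$ loops, giving $\pi\cong(\mathbb{Z}/2\mathbb{Z})\times F_m$; and Crisp's constraint on the centralizer of the central involution rules out $m\ne1$. This is clean and correct. (The paper records the same conclusion, via the citation to \cite{hi12} in the sentence following the theorem.)

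By contrast, your treatment of the first assertion is a plan, not a proof. You correctly deduce from Crisp's theorem that if $g\in\kappa$ has finite order then $C_\pi(g)$ is finite, and you correctly recognize that this alone is not a contradiction. But the proposed continuation --- pass to the orientable cover $P_\kappa$, decompose it as a connected sum, locate $g$ in a virtually free summand, and then ``play the orientation-reversing deck involution against this structure'' --- is never carried out. The phrases ``the remaining work is to\dots'', ``one aims either to produce\dots or else\dots'', and ``I expect this to be the main obstacle'' are explicit admissions that the decisive step is missing. You have diagnosed the difficulty accurately: orientation-preserving torsion is locally harmless (lens spaces exist), so any argument must be global, binding $g$ to the orientation-reversing part of $\pi$. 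But diagnosis is not proof. As written, nothing prevents $g$ from sitting in a summand of $P_\kappa$ that the deck involution simply swaps with another summand, and you have not explained why that cannot happen or why it would help if it did. Until you either control how the involution acts on the prime decomposition of $P_\kappa$ and on the graph-of-groups data of the relevant summand, or abandon the passage to $P_\kappa$ and argue directly in the graph-of-groups decomposition of $\pi$ itself (which is where \cite{hi17} works), the first assertion remains unproved in your write-up.
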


In particular, if $\pi_1(P)$ is not virtually free then $\pi_1(P)\cong\pi\mathcal{G}$, 
where each vertex group of $(\mathcal{G},\Gamma)$ has one end, 
and each edge group has order 2.
The orientation-preserving subgroups of the vertex groups are then $PD_3$-groups.
Examples of such $PD_3$-complexes which are 3-manifolds can be assembled from
quotients of punctured aspherical 3-manifolds by free involutions.
However, it is not yet known whether an indecomposable $PD_3$-complex with orientation cover 
homotopy equivalent to a 3-manifold must be homotopy equivalent to a 3-manifold.
On the other hand, if $\pi_1(P)$ is virtually free then $P\simeq{RP^2\times{S^1}}$ or $S^2\tilde\times{S^1}$
\cite{hi12}.

The arguments of this section apply with little change to the study of $PD_n$-complexes
with $(n-2)$-connected universal cover.
When $n$ is odd, the results are also similar.
However when $n$ is even they are in one sense weaker, in that it is not known whether the
group must be virtually torsion-free, and another sense stronger,
in that if $\pi$ is indecomposable, virtually free and has no dihedral subgroups
of order $>2$ then either $\pi$ has order $\leq2$
or it has two ends and its maximal finite subgroup have cohomology
of period dividing $n$.
(See \cite{bbh}.)

\section{aspherical case}

The work of Perelman implies that every homotopy equivalence between 
aspherical 3-manifolds is homotopic to a homeomorphism. 
It is natural to ask also whether every $PD_3 $-group 
is the fundamental group of an aspherical closed 3-manifold. 
An affirmative answer in general would suggest that a large part 
of the study of 3-manifolds may be reduced to algebra. 

If $G$ is a $PD_3 $-group which has a subgroup isomorphic to $\pi_1 (M)$ 
where $M$ is an aspherical 3-manifold then $G$ is itself a 3-manifold group.
For $M$ is either Haken, Seifert fibred or hyperbolic,
by the Geometrization Theorem of Perelman and Thurston,
and so we may apply \cite{zi82}, 
Section 63 of \cite{Z} or Mostow rigidity, respectively.
Thus it is no loss of generality to assume that $G$ is orientable.

It may also be convenient to assume also that $G$ is coherent,
and, in particular, finitely presentable. 
(No $PD_3$-group has ${F(2)\times{F(2)}}$ as a subgroup, 
which is some evidence that $PD_3$-groups might be coherent \cite{kr89}.)

The key approaches to this question seem to be through
\begin{enumerate}
\item{splitting over proper subgroups -- geometric group theory}; or

\item{homological algebra}; or

\item{topology}.
\end{enumerate}
Of course, there are overlaps between these.
The fact that $PD_2$-groups are surface groups is one common ingredient.

\smallskip
\noindent(1).
This approach has been most studied, 
particularly in the form of the Cannon Conjecture,
and there is a good exposition based on JSJ decompositions of (finitely presentable) 
$PD_3$-groups and pairs of groups (as in \cite{ds00})
in \cite{wall04}.
If one takes this approach it is natural to consider 
also the question of realizing $PD_3$-pairs of groups.

Splitting of $PD_3$-groups over proper subgroups was first considered by Thomas \cite{thom84}.
Kropholler showed that $PD_n$-groups with {\it {Max-c}}
(the maximum condition on centralizers) have canonical 
splittings along codimension-1 poly-$Z$ subgroups \cite{Kr90}.
(When $n=3$ such subgroups are $\mathbb{Z}^2$
or the Klein bottle group $\mathbb{Z}\rtimes\mathbb{Z}$.)
Castel showed that all $PD_3$-groups have {\it Max-c},
and used \cite{ss03} to give a JSJ decomposition for arbitrary $PD_3$-groups
(i.e., not assuming finite presentability) \cite{ca07}.

[Kropholler and Roller have considered splittings of 
a $PD_n$-group $G$ over subgroups which are $PD_{n-1}$-groups.
If $S$ is such a subgroup let  $\overline{\mathbb{F}_2[S]}=Hom(\mathbb{F}_2[S],\mathbb{F}_2)$.
Then Poincar\'e duality 
(for each of $G$ and $S$) and Shapiro's Lemma together
give $H^1(G;\overline{\mathbb{F}_2[S]}\otimes_{\mathbb{F}_2[S]}\mathbb{F}_2[G])=\mathbb{F}_2$,
and $G$ splits over a subgroup commensurable with $S$ if and only if the restriction to
$H^1(S;\overline{\mathbb{F}_2[S]}\otimes_{\mathbb{F}_2[S]}\mathbb{F}_2[G])$ is 0.
See \cite{[KR88],[KR88a],kr89}.]

In the simplest cases, $G$ is either solvable, of Seifert type or atoroidal 
(i.e., has no abelian subgroup of rank $>1$).
The solvable case is easy,
and the Seifert case was settled by Bowditch \cite{bow}.
(If $G/G'$ is infinite, this case follows from the earlier work of Eckmann, M\"uller and Linnell \cite{hi85}.)
The most studied aspect of the atoroidal case is the {\bf Cannon Conjecture},
that an atoroidal, Gromov hyperbolic $PD_3$-group 
should be a cocompact lattice in $PSL(2,\mathbb{C})$.
(In \cite{bm91} it is shown that a Gromov hyperbolic $PD_3$-group has 
boundary $S^2$, and in \cite{kk07} it is shown that an atoroidal $PD_3$-group 
which acts geometrically on a locally compact $CAT(0)$ space is Gromov hyperbolic.)

\smallskip
\noindent(2).
The homological approach perhaps has the least prospect of success, 
as it starts from the bare definition of a $PD_3$-group,
and needs something else, to connect with topology.
However it has proven useful in the subsidiary task of finding 
purely algebraic proofs for algebraic properties of 3-manifold groups, 
an activity that I have pursued for some time.
One can also show that if $G$ has sufficiently nice subgroups then it is a 3-manifold group.
For instance, if $G$ has a nontrivial $FP_2$ ascendant subgroup of infinite index,
then either $G$ is the group of a Seifert fibred 3-manifold or it is virtually the group
of a mapping torus \cite{hi06}.
In particular, $G$ is the fundamental group of a $\mathbb{S}ol^3$-manifold or 
of a Seifert fibred 3-manifold if and only if it has a nontrivial locally-nilpotent normal subgroup.

This strategy seems to work best when there subgroups which are surface groups.
One relatively new ingredient is  the Algebraic Core Theorem of Kapovich
and Kleiner \cite{kk05}, which ensures that this is so if $G$ has an $FP_2$ subgroup 
with one end.
If $H$ is a surface subgroup then either $H$ has finite index in its commensurator
$Comm_G(H)$ or $H$ has a subgroup $K$ of finite index such that $[G:N_G(K)]$ is finite
(and then $Comm_G(H)=Comm_G(K)\geq{N_G(K)}$ has finite index in $G$).
More generally,
if $H$ is a $FP_2$ subgroup then either $G$ is virtually the group of a mapping torus 
or $H$ has finite index in a subgroup $\widehat{H}$ which is its own normalizer in $G$.
Does $G$ then split over $H$?

However it remains possible that there may be $PD_3$-groups which are simple groups,
or even Tarski monsters, whose only proper subgroups are infinite cyclic.
It is then not at all clear what to do.
[Once again, the Davis construction may be used to give $PD_n$-groups
containing Tarski monsters, for all $n>3$ \cite{sa11}.]

\smallskip
\noindent(3).
To the best of my knowledge, no-one has explored the third option in any detail.
It has the advantage of direction connection with topology, 
but needs $G$ to be finitely presentable.
Here one starts from the fact that if $X$ is an orientable $PD_3$-complex
then there is a degree-1 map $f:M\to{X}$ with domain a closed orientable 3-manifold.
(This is not hard to see, but is a consequence of working in a low dimension.)
Since $\pi_1(f)$ is surjective and $\pi_1(X)$ is finitely presentable, 
$\mathrm{Ker}(\pi_1(f))$ is normally generated by finitely many elements of $\pi_1(M)$,
which may be represented by a link $L\subset{M}$.
(The link $L$ is far from being unique!)
We might hope to modify $M$ by Dehn surgery on $L$ to render the kernel trivial.
This {\it is\/} possible if $X$ is homotopy equivalent to a closed orientable 3-manifold $N$, 
for $M$ may then be obtained from $N$ by Dehn surgery on a 
link whose components are null-homotopic in $N$ \cite{Gad}. 
However, Gadgil's argument appears to use the topology of the target space 
in an essential way.
Moreover, there are $PD_3$-complexes which are not homotopy equivalent to manifolds,
and so this cannot be carried through in general.
(All known counterexamples have finite covers which are homotopy equivalent to manifolds.)

Let $L=\amalg_{i\leq{m}}L_i$ be a link in a 3-manifold $M$ and let
$n(L)=\amalg_{i\leq{m}}n(L_i)$ be an open regular neighbourhood of $L$ in $M$.
We shall say that $L$ admits a {\it drastic\/} surgery 
if there is a family of slopes
$\gamma_i\subset\partial{n(L_i)}$ such that the normal closure of
$\{[\gamma_1],\dots,[\gamma_n]\}$ in $\pi_1(M-n(L))$
meets the image of each peripheral subgroup $\pi_1(\partial{n(L_i)})$
in a subgroup of finite index.

If $X$ is an aspherical orientable $PD_3$-complex and $f:M\to{X}$ is a degree-1 map 
such that $\mathrm{Ker}(f_*)$ is represented by a link $L$ which admits a drastic surgery 
then after the surgery we may assume that $\mathrm{Ker}(f_*)$ is normally generated 
by finitely many elements of finite order.
Let $M=\#_{i=1}^rM_i$ be the decomposition into irreducibles.
Since $X$ is aspherical the map $f$ extends to a map from $f_\vee:\vee_{i=1}^rM_i\to{X}$.
Elementary considerations then show that $f_\vee$ restricts to a homotopy equivalence 
from one of the aspherical summands of $M$ to $X$.

Unfortunately there are knots which do not admit drastic surgeries,
but we do have considerable latitude in our choice of link 
$L$ representing $\mathrm{Ker}(f_*)$.
In particular, we may modify $L$ by a link homotopy, 
and so the key question may be:

\smallskip
\centerline{ 
is every knot $K\subset{M}$ {\it homotopic\/} to one 
which admits a drastic surgery?}

\smallskip
\noindent
The existence of $PD_3$-complexes which are not homotopy equivalent to
3-manifolds shows that we cannot expect a stronger result,
in which 
``meets the image $\dots$ finite index" is replaced by 
``contains $\dots$ $\pi_1(\partial\overline{n(L_i)})$"
in the definition of drastic surgery.
Can we combine Dehn surgery with passage to finite covers and
varying $L$ by link-homotopy?

[There is a parallel issue in the $PD_2$-case. 
Here the strategy can be justified {\it ex post facto\/}: 
if  a degree-1 map $f:M\to{N}$ of closed orientable surfaces is not a homotopy equivalence
then there is a non-separating simple closed curve $\gamma\subset{M}$ 
with image in the kernel of $\pi_1(f)$ \cite{Ed}.
Surgery on $\gamma$ replaces $f$ by a new degree-1 map $f':M'\to{N}$, 
where $\chi(M')=\chi(M)+2$.
After finitely many iterations we obtain a 
degree-1 map $\hat{f}:\hat{M}\to{N}$, with $\chi(\hat{M})=\chi(N)$. 
Such a map must be a homotopy equivalence.
However it seems that Edmonds' argument requires the codomain $N$ to also be a 2-manifold,
which is what we want to prove!
Can we avoid a vicious circle?]

\smallskip
On a more speculative level, can we use stabilization with products to bring 
the methods of high dimensional topology (as in the Novikov Conjecture) to bear?
Is $G\times\mathbb{Z}^r$ realizable by an aspherical $(r+3)$-manifold for some $r>0$?

Among the most promising new ideas for studying $PD_3$-groups
since Wall's survey are the JSJ decomposition for arbitrary $PD_3$-groups \cite{ca07},
based on the work of Scott and Swarup,  the Algebraic Core Theorem,
based on coarse geometry \cite{kk05}, and the use of profinite and pro-$p$ completions, 
particular in connection with the Tits alternative, as in \cite {bb17} and \cite {kz07}.

\section{questions on $PD_3$-groups and their subgroups: preamble}

In the following sections we shall present a number of questions on subgroups of $PD_3 $-groups, 
motivated by results conjectured or already established geometrically for 
3-manifold groups. 
The underlying question is whether every $PD_3$-group $G$
is the fundamental group of some aspherical closed 3-manifold,
and has been discussed above.
The following questions represent possibly simpler consequences.
(If we assume $G$ is coherent and has a finite $K(G,1)$-complex,
as is the case for all 3-manifold groups, a number of these questions have
clear answers.)

Prompted by the main result of \cite{kk05}, we define an {\it open}
$PD_n$-group to be a countable group $G$ of cohomological dimension 
$\leq n-1$ such that every  nontrivial $FP$ subgroup $H$ with
$H^s(H;\mathbb{Z}[H])=0$ for $s<n-1$ is the ambient group of a 
$PD_n$-pair $(H,\mathcal{T})$, for some set of monomorphisms $\mathcal{T}$.
Every subgroup of infinite index in a $PD_3$-group $G$ is an open $PD_3$-group 
in our sense, by Theorem 1.3 of \cite{kk05}.
(The analogies are precise if $n=2$, but these definitions are too broad
when $n\geq4$. We shall consider only the case $n=3$.)

The corresponding questions for subgroups of open $PD_3$-groups
should be considered with these. 
Any group with a finite 2-dimensional Eilenberg -- Mac Lane complex is the 
fundamental group of a compact aspherical 4-manifold with boundary, 
obtained by attaching 1- and 2-handles to $D^4$.
(Conjecturally such groups are exactly the finitely presentable
groups of cohomological dimension 2).
On applying the reflection group trick of Davis to the boundary we see that 
each such group embeds in a $PD_4$-group \cite{da00}. 
Thus the case considered here is critical.

We assume throughout that $G$ is an orientable $PD_3 $-group. 
The normalizer and centralizer of a subgroup $H$ of $G$ shall be denoted by 
$N_G (H)$ and $C_G (H)$, respectively. 
We shall also let $\zeta G=C_G(G)$, $G'$ and $G^{(\omega )}=\cap G^{(n)}$
denote the centre, the commutator subgroup and the intersection of the terms 
of the derived series of $G$, respectively. 
A group has a given property {\it virtually} if it
has a subgroup of finite index with that property.

Since we are interested in $PD_3$-groups,
we shall use {\it 3-manifold group\/} henceforth to mean 
{\it fundamental group of an aspherical closed 3-manifold}.

\section{the group}

If $M=K(G,1)$ is a closed 3-manifold we may assume it has
one 0-cell and one 3-cell, and equal numbers of 1- and 2-cells.
Hence $G$ has a finite presentation of deficiency 0;
this is clearly best possible, 
since $\beta_1(G;\mathbb{F}_2)=\beta_2(G;\mathbb{F}_2)$.
Moreover $G$ is $FF$,
i.e., the augmentation module $\mathbb{Z}$ has a finite {\it free} 
$\mathbb{Z}[G]$-resolution,
while $\tilde K_0 (\mathbb{Z}[G])=Wh(G)=0$ 
and $\tilde M\cong\mathbb{R}^3$, so $G$ is 1-connected at $\infty$.

In general, the augmentation $\mathbb{Z}[G]$-module $\mathbb{Z}$ 
has a finite projective resolution, 
so $G$ is almost finitely presentable ($FP_2$),
and there is a 3-dimensional $K(G,1)$ complex. 
The $K(G,1)$-complex is finitely dominated, and hence a Poincar\'e complex 
in the sense of \cite{wall}, if and only if $G$ is finitely presentable.

For each $g\in G$ with infinite conjugacy class 
$[G:C_G(\langle g\rangle)]=\infty$, 
so $c.d.C_G(\langle g\rangle)\leq2$  \cite{[St77]}.
Hence $C_G(\langle g\rangle)/\langle g\rangle$ is locally virtually free,
by Theorem 8.4 of  \cite{[B]}.
Therefore $G$ must satisfy the Strong Bass Conjecture, by  \cite{[Ec86]}.

An $FP_2$ group $G$ such that $H^2(G;\mathbb{Z}[G])\cong\mathbb{Z}$  
is virtually a $PD_2$-group  \cite{bow}.

\begin{enumerate}
\item Is $G$ finitely presentable? 

\item If $G$ is finitely presentable does it have deficiency 0?

\item Is $G$ of type $FF$?

\item Is $\tilde K_0 (\mathbb{Z}[G])=0$? Is $Wh(G)=0$?  

\item Is $G$ 1-connected at $\infty$? 

\item Is $K(G,1)$ homotopy equivalent to a finite complex?

\item If $G$ is an $FP_3$ group such that $H^3(G;\mathbb{Z}[G])\cong\mathbb{Z}$ is
$G$ virtually a $PD_3$-group?
\end{enumerate}

If (7) is true then centres of 2-knot groups are finitely generated.

\section{subgroups in general}

Since $G$ has cohomological dimension 3 it has no nontrivial finite subgroups. 
Any nontrivial element $g$ generates an infinite cyclic subgroup 
$\langle g\rangle$; it is not known 
whether there need be any other proper subgroups. 
If a subgroup $H$ of $G$ has finite index then it is also a $PD_3 $-group. 
The cases when $[G:H]$ is infinite are of more interest, and then either 
$c.d.H=2$ or $H$ is free, by  \cite{[St77]} and  \cite{[S]}. 
If there is a finitely generated (respectively, $FP_2$) subgroup of 
cohomological dimension 2 there is one such which has one end 
(i.e., which is indecomposable with respect to free product).   
A solvable subgroup $S$ of Hirsch length $h(S)\geq2$ must be finitely presentable, 
since either $[G:S]$ is finite or $c.d.S=2=h(S)$  \cite{[Gi79]}.
(In particular, abelian subgroups of rank $>1$ are finitely generated.)

3-manifold groups are {\it coherent}: 
finitely generated subgroups are finitely presentable. 
In fact something stronger is true: if $H$ is a finitely generated subgroup
it is the fundamental group of a compact 3-manifold (possibly with boundary)  \cite{[Sc73]}.
We shall say that a group $G$ is {\it almost coherent\/} if
every finitely generated subgroup of $G$ is $FP_2$.
This usually suffices for homological arguments,
and is implied by either coherence of the group or coherence of the group ring.
(If $\pi$ is the fundamental group of a graph manifold then
the group ring $\mathbb{Z}[\pi]$ is coherent.
The corresponding result for lattices in $PSL(2,\mathbb{C})$
is apparently not known.)

If $G$ is a $PD_3$-group with a one-ended $FP_2$ subgroup $H$ 
then there is a system of monomorphisms $\sigma $ such that $(H,\sigma)$ 
is a $PD_3 $-pair \cite{kk05}. 
Hence $\chi(H)\leq0$.
In particular, no $PD_3$-group has a subgroup $F\times{F}$ with $F$ a noncyclic free group.
(This was first proven in \cite{kr89}.)
As such groups $F\times F$ have finitely generated subgroups which are not 
finitely related (cf. Section 8.2 of \cite{[B]}),
this may be regarded as weak evidence for coherence.
(On the other hand, every surface group $\sigma$ with $\chi (\sigma )<0$ has such a  
subgroup $F$ and so $F\times F$ is a subgroup of $\sigma\times\sigma$.
Thus $PD_n $-groups with $n\geq 4$ need not be coherent.)

Let $M$ be a closed orientable 3-manifold.
Then $M$ is Haken, Seifert fibred or hyperbolic, 
by the Geometrization Theorem.
With  \cite{[KM10]} it follows that if $\pi_1(M)$ is infinite then 
it has a $PD_2$-subgroup.
A transversality argument implies that every element of 
$H_2(M;\mathbb{Z})\cong H^1(M;\mathbb{Z})\cong [M;S^1]$ 
is represented by an embedded submanifold.
If $M$ is aspherical it follows that $H_2 (\pi_1 (M);\mathbb{Z})$ is generated 
by elements represented by surface subgroups of $\pi_1 (M)$.

If $G/G'$ is infinite then $G$ is an HNN extension with finitely generated 
base and associated subgroups  \cite{[BS78]}, and so has a finitely generated 
subgroup of cohomological dimension 2. 
If, moreover, $G$ is almost coherent then it has a $PD_2$-subgroup \cite{kk05}.

If $H$ is a subgroup of $G$ which is a $PD_2$-group then $H$ has finite index in a maximal such subgroup.
This is clear if $\chi(H)<0$, by the multiplicativity of $\chi$ in the passage to subgroups of finite index.
If $\chi(H)=0$ we argue instead that an infinite increasing union of copies of $\mathbb{Z}^2$ must have
cohomological dimension 3.

\begin{enumerate}
\addtocounter{enumi}{7}
\item Is there a noncyclic proper subgroup? 
If so, is there one of cohomological dimension 2? and finitely generated?

\item Is there a subgroup which is a surface group? 

\item Is every element of $H_2(G;\mathbb{Z})$ represented by a $PD_2$ subgroup?

\item Is $G$ (almost) coherent? 

\item Is $\mathbb{Z}[G]$ coherent as a ring? 
        
\item Does every (finitely presentable) subgroup of cohomological 
dimension 2 have a (finite) 2-dimensional Eilenberg-Mac Lane complex 
(with $\chi\leq0$)?

\item Let $H$ be a finitely generated subgroup with one end and of infinite index 
in $G$. Does $H$ have infinite abelianization? contain a surface group?

\end{enumerate}

\section{ascendant subgroups}

If $M$ is a closed aspherical 3-manifold which is not a graph manifold
then $M$ has a finite covering space which fibres over the circle \cite{[Ag12], [PW12]}.
Hence indecomposable finitely generated subgroups of infinite index in such groups 
are (finitely presentable) semidirect products $F\rtimes\mathbb{Z}$, 
with $F$ a free group.
Such groups are HNN extensions with finitely generated free base, 
and associated subgroups free factors of the base \cite{[FH99]}.

If $N$ is an $FP_2$ ascendant subgroup of $G$ and 
$c.d.N=2$ then it is a surface group and $G$ has a subgroup of finite index 
which is a surface bundle group.
If $c.d.N=1$ then $N\cong\mathbb{Z}$ and either $G$ is virtually poly-$Z$ or 
$N$ is normal in $G$ and $[G:C_G (N)]\leq 2$  \cite{[BH91],hi06}. 
In the latter case $G$ is the group of a Seifert fibred 3-manifold  \cite{bow}. 
It is easy to find examples among normal subgroups of 3-manifold groups to show 
that finite generation of $N$ is necessary for these results.
                               
If $N$ is finitely generated, normal and $[G:N]=\infty$ then 
$H^1 (G/N;\mathbb{Z}[G/N])$ is isomorphic to $H^1 (G;\mathbb{Z}[G/N])$
and hence to $H_2 (G;\mathbb{Z}[G/N])\cong{H_2 (N;\mathbb{Z})}$,
by Poincar\'e duality. 
If $G/N$ has two ends, then
after passing to a subgroup of finite index,
we may assume that $G/N\cong\mathbb{Z}$. 
Shapiro's lemma and Poincar\'e duality (for each of $G$ and $G/N$)
together imply that 
$lim_\rightarrow{H^2 (N; M_i)}$ is 0 for any direct system $M_i$ with limit 0.
(See Theorem 1.19 of \cite{Hi}.)
Hence $N$ is $FP_2$ by Brown's criterion  \cite{[Br75]} and so is a surface group by
the above result.

\begin{enumerate}
\addtocounter{enumi}{14}

\item Is there a simple $PD_3 $-group?

\item Is $G$ virtually representable onto $\mathbb{Z}$?

\item Must a finitely generated {\it normal} subgroup $N$ be finitely 
presentable?

\item Suppose $N\leq U$ are subgroups of $G$ with $U$ finitely generated
and indecomposable, $[G:U]$ infinite, $N$ subnormal in $G$ and $N$ not cyclic. 
Is $[G:N_G(U)]<\infty$? (Cf.  \cite{[El84]}.)

\item{ Let $G$ be a $PD_3$-group such that $G'$ is free.
Is $G$ a semidirect product $K\rtimes\mathbb{Z}$ with $K$ a $PD_2$-group?}

\end{enumerate}

\section{centralizers, normalizers and commensurators}

If $G$ is a $PD_3$-group with nontrivial centre then 
$\zeta{G}$ is finitely generated and $G$ is the fundamental 
group of an aspherical Seifert fibred 3-manifold  \cite{bow}.
(See also  \cite{hi85}.) 
Since an elementary amenable group of finite cohomological dimension is 
virtually solvable  \cite{[HL92]},
it follows also that either $G$ is virtually poly-$Z$ or
its maximal elementary amenable normal subgroup is cyclic.

Every strictly increasing sequence of centralizers $C_0<C_1<\dots <C_n=G$ 
in a $PD_3$-group $G$ has length $n$ at most 4  \cite{hi06}.
(The finiteness of such sequences in any $PD_3$-group 
is due to Castel  \cite{ca07}.)
On the other hand,  the 1-relator group with presentation 
$\langle{t,x}\mid{tx^2t^{-1}=x^3}\rangle$
has an infinite chain of centralizers,
and hence so does the $PD_4$-group obtained 
from it by the Davis construction \cite{[Kr93]}.

If the sequence of centralizers $C_1\cong\mathbb{Z}<C_2<C_3<C_4=G$ is 
strictly increasing then $C_3$ must be nonabelian. 
(See  \cite{hi06}.)
Hence it is $FP_2$ \cite{ca07}, 
and so either $G$ is Seifert or $c.d.C_3=2$.
In all cases it follows that $C_2\cong\mathbb{Z}^2$.
Equivalently, if $G$ has a maximal abelian subgroup $A$
which is not finitely generated then $1<A<G$ is the only 
sequence of centralizers containing $A$.

If every abelian subgroup of $G$ is finitely generated then
the centralizer $C_G (x)$ of any $x\in{G}$ is finitely generated  \cite{ca07}.
It then follows that every centralizer is either $\mathbb{Z}$,
finitely generated and of cohomological dimension 2
or of index $\leq2$ in $G$  \cite{hi06}.
(Applying the Davis construction to the group with presentation 
$\langle{t,x}\mid{txt^{-1}=x^2}\rangle$ gives a $PD_4$-group 
with an abelian subgroup which is not finitely generated
\cite{[Me90]}.)

An element $g$ is a {\it root\/} of $x$ if $x=g^n$ for some $n$.
All roots of $x$ are in $C_G (x)$.
If $C_G(x)$ is finitely generated then $x$ is not infinitely divisible.
For if $c.d.C_G(x)=1$ then $C_G(x)\cong\mathbb{Z}$;
if $c.d.C_G(x)=2$ then $C_G(x)/\langle{x}\rangle$ is
virtually free, by Theorem 8.4 of  \cite{[B]};
and if $c.d.C_G(x)=3$ then $C_G(x)/\langle{x}\rangle$ is 
virtually a $PD_2$-group  \cite{bow}.
Conversely, if $x$ is not infinitely divisible then
$C_G(x)$ is finitely generated  \cite{ca07}.

If $C_G (x)$ is nonabelian then it is $FP_2$, 
and is either of bounded Seifert type or has finite index in $G$ \cite{ca07}.
In the latter case either $[G:C_G (x)]\leq2$ or $G$ is virtually $\mathbb{Z}^3$,
by Theorem 2 of  \cite{hi06}.

If $x$ is a nontrivial element of $G$ then 
${[N_G (\langle x\rangle):C_G (x)]}\leq 2$
(since $\langle{x}\rangle\cong\mathbb{Z}$).
If $F$ is a finitely generated nonabelian free subgroup of $G$ 
then $N_G(F)$ is finitely generated and $N_G(F)/F$ is finite 
or virtually $\mathbb{Z}$  \cite{hi06}.
(See  \cite{[Sc76]} for another argument in the 3-manifold case.)
If $H$ is an $FP_2$ subgroup which is a nontrivial free product 
but is not free then $[N_G(H):H]<\infty$ and $C_G(H)=1$  \cite{hi06}.

If $H$ is a one-ended $FP_2$ subgroup of infinite index in $G$ 
then either $[G:N_G(H)]$ or $[N_G(H):H]$ is finite.
(See Lemma 2.15 of  \cite{Hi}).
More precisely, 
define an increasing sequence of subgroups $\{ H_i |i\geq 0\}$ by 
$H_0 =H$ and $H_i =N_G (H_{i-1} )$ for $i>0$. 
Then $\hat H=\cup H_i $ is $FP_2$
and either $c.d.\hat H=2$, 
$\hat H$ has one end and $N_G (\hat H)=\hat H$, or $\hat H$ is a
$PD_3 $-group and $G$ is virtually the group of a surface bundle,
by Theorem 2.17 of  \cite{Hi}.
In particular, if $G$ has a subgroup $H$ which is a surface group with 
$\chi (H)=0$ (respectively, $<0$) then either it has such a subgroup 
which is its own normalizer 
in $G$ or $G$ is virtually the group of a surface bundle.

The {\it commensurator} in $G$ of a subgroup $H$ is the subgroup 
\[
Comm_G (H)=\{ g\in G\mid 
[H:H\cap gHg^{-1} ]<\infty ~and ~[H:H\cap g^{-1} Hg]<\infty\}.
\]
It clearly contains $N_G (H)$.

If $x\not=1$ in $G$ then the Baumslag-Solitar relation $tx^p t^{-1} =x^q $ 
implies that $p=\pm{q}$  \cite{ca07}. 
It follows easily that
$Comm_G (\langle x\rangle)=\cup N_G (\langle x^{n!}\rangle)$.
Since the chain of centralizers $C_G (\langle x^{n!}\rangle$ is increasing and 
$[N_G(\langle{x^k}\rangle:C_G(\langle{x^k}\rangle]\leq2$
for any $k$ it follows that
$Comm_G(\langle x\rangle)=N_G(\langle x^{n!}\rangle)$
for some $n\geq1$.

If $H$ is a $PD_2$-group then Theorem 1.3 and Proposition 4.4 of
 \cite{[KR89a]} imply that either $[Comm_G (H):H]<\infty$ 
or $H$ is commensurable with a subgroup $K$ such that $[G:N_G(K)]<\infty$,
and so $[G:Comm_G(H)]<\infty$.
This dichotomy is similar to the one for normalizers of $FP_2$ subgroups
cited above. 
It can be shown that if $H\cong\mathbb{Z}^2$ then either $Comm_G(H)=N_G(H)$
or $G$ is virtually $\mathbb{Z}^3$.
However, the exceptional cases do occur.
If $G=B_1$ is the flat 3-manifold group with presentation
$\langle{t,x,y}\mid{txt^{-1}=x^{-1},~ty=yt,~xy=yx}\rangle$ and
$A$ is the subgroup generated by $\{t,y\}$ then $N_G(A)=A$
but $Comm_G(A)=G$.

\begin{enumerate}
\addtocounter{enumi}{19}

\item Is every abelian subgroup of $G$ finitely generated? 

\item If $G$ is not virtually abelian and $H$ is an $FP_2$ subgroup such that $N_G(H)=H$ is $[Comm_G(H):H]$ finite?
\end{enumerate}

\section{the derived series and perfect subgroups}

Let $G^{(\omega)}=\cap{G^{(n)}}$ be the intersection of the terms of the derived series for $G$.
If $G^{(\omega)}=G^{(n)}$ for some finite $n$ then $n\leq3$, and $G/G^{(\omega)}$ 
is either a finite solvable group with cohomological period dividing 4, 
or has two ends and is $\mathbb{Z}$, $\mathbb{Z}\oplus{Z/2Z}$ or $D_\infty=Z/2Z*Z/2Z$,
or has one end and is a solvable $PD_3$-group.
(The argument given in \cite{[Hi96]} for orientable 3-manifold groups also applies here.)
There is a similar result for the lower central series.
If $G$ is orientable and $G_{[\omega]}=G_{[n]}$ for some finite $n$ then $n\leq3$,
and $G/G_{[\omega]}$ is finite, $\mathbb{Z}$  or a nilpotent $PD_3$-group \cite{[Te97]}.

If $G$ is not virtually representable onto $\mathbb{Z}$ then $G/G^{(\omega )} $ is
either a finite solvable group with cohomological period dividing 4
(and $G^{(\omega )} $ is a perfect $PD_3 $-group) or is a finitely generated,
infinite, residually finite-solvable group with one or infinitely many ends.
Let $M$ be the (aspherical) 3-manifold obtained by 
0-framed surgery on a nontrivial knot $K$ with Alexander polynomial $\Delta_K\dot=1$,
and let $G=\pi_1(M)$.
Then $G'$ is a perfect normal subgroup which is not finitely generated. 
(In this case $G_{[\omega]}=G^{(\omega)}=G'$, and $G/G^{(\omega)}\cong\mathbb{Z}$.)
Replacing a suitable solid torus in $RP^3\#RP^3$ by the exterior of such a knot $K$ gives an example with
$G/G^{(\omega)}\cong{D_\infty}$.

Let $\kappa$ be a perfect normal subgroup of the fundamental group $\pi$ of a $PD_3$-complex $X$.
Then $\rho=\pi/\kappa$ is $FP_2$, since $\pi$ is $FP_2$ and $H_1(\kappa;\mathbb{Z})=0$.
The arguments of  \cite{crisp} give $\rho\cong(*_{i=1}^rG_i)*V$,
where each factor $G_i$ has one end and $V$ is virtually free.
Moreover, if $\rho$ is infinite and has a nontrivial finite normal subgroup then $\rho$ has two ends.
(However, the further analysis of \cite{hi12} does not apply, since
there is no analogue of the Splitting Theorem of Turaev.)
We also have $H_2(\kappa;\mathbb{Z})\cong{H^1(\rho;\mathbb{Z}[\rho])}$ as an abelian group.
In particular, if $\kappa$ is acyclic then $\rho$ is a $PD_3$-group.

The intersection $P=\cap G^{(\alpha )} $ of the terms of the transfinite 
derived series for $G$ is the maximal perfect subgroup of $G$, 
and is normal in $G$.
The quotient $G/P$ is $FP_2$.
If $P\not=1$ and $[G:P]$ is infinite then $c.d.P=2$,
but $P$ cannot be $FP_2 $, 
for otherwise it would be a surface group \cite{[Hi87]}. 
Note that $P\subseteq G^{(\omega )} $, and if $c.d.P=2$ then 
$c.d.G^{(\omega )} =2$ also. 
If $[G:P]$ is infinite and $\zeta G\not= 1$ then $P=1$.

If $G$ is a $PD_3$-group and $H$ is a nontrivial $FP_2$ subgroup
such that $H^1(H;\mathbb{Z})=0$ then $[G:H]$ is finite.
(Use \cite{kk05}. See \cite{[Ho82]} for 3-manifold groups.)

\begin{enumerate}  
\addtocounter{enumi}{21}
      
\item Can a nontrivial finitely generated normal subgroup of infinite index be perfect?
acyclic?

\item If a finitely generated, infinite, residually solvable group has infinitely 
many ends must it be virtually representable onto $\mathbb{Z}$?

\item If $P=1$ is $G$ residually solvable (i.e., is $G^{(\omega )} =1$ also)?
\end{enumerate}

\section{the tits alternative}

A group {\it satisfies the Tits alternative\/} if every finitely generated subgroup is either solvable
or contains a non-abelian free group.

Let $N$ be the subgroup generated by all the normal subgroups which have 
no nonabelian free subgroup. 
Then $N$ is the maximal such subgroup, and clearly it contains the
maximal elementary amenable normal subgroup of $G$.
If $N$ is nontrivial then either $N\cong\mathbb{Z}$, $c.d.N=2$ or $N=G$.
If $N$ is a rank 1 abelian subgroup then $N\cong\mathbb{Z}$.
(For otherwise $N\leq G'$ and $G'\leq C_G(N)$, 
so either $[G:C_G(N)]$ is finite, which can be excluded by \cite{bow},
or $G'$ is abelian, by Theorem 8.8 of \cite{[B]},
in which case $G$ is solvable and hence virtually poly-$Z$,
and $N$ must again be finitely generated.)
If $c.d.N=2$ then $N$ cannot be $FP_2$, 
for otherwise it would be a surface group and $G$ 
would be virtually the group of a surface bundle \cite{[Hi87]}.
Since $N$ has no nonabelian free subgroup this would imply that $N$ 
and hence $G$ are virtually poly-$Z$, and so $N=G$. 
Similarly, if $N=G$ and $G/G'$ has rank at least 2 then 
there is an epimorphism $\phi:G\to\mathbb{Z}$ with finitely generated kernel \cite{[BNS87]}. 
Hence $\mathrm{Ker}(\phi)$ is a surface group and so $G$ is poly-$Z$. 

A finitely generated, torsion-free group is {\it properly  locally cyclic\/} 
if every finitely generated subgroup of infinite index is cyclic.
If $G$ is an almost coherent $PD_3$-group which is not virtually properly cyclic 
then every finitely generated subgroup of $G$ satisfies the Tits alternative \cite{bb17}.
(In fact it suffices for their argument for  ``almost coherent" to be assumed only for the subgroup, 
as in \cite{[Hi03]}).
We may then use \cite{[Gi79]} and Corollary 1.4 of \cite{kk05} to show that
solvable subgroups are abelian or virtually poly-$Z$.

\begin{enumerate}
\addtocounter{enumi}{24}

\item Is $N$ the maximal elementary amenable normal subgroup?

\item If $H$ is a finitely generated subgroup which has no nonabelian 
free subgroup must it be virtually poly-$Z$?

\item{In particular, is a $PD_3$-group of subexponential growth virtually nilpotent?}
\end{enumerate}

\section{atoroidal groups}

We shall say that $G$ is {\it atoroidal} if all of its finitely generated 
abelian subgroups are cyclic. 
Two-generator subgroups of atoroidal, almost coherent $PD_3$-groups
are either free or of finite index, by  \cite{BS89} together with the 
Algebraic Core Theorem of \cite{kk05}.
3-Manifolds with atoroidal fundamental group are hyperbolic, by the Geometrization Theorem.
Every closed hyperbolic 3-manifold has a finite covering space which fibres over the circle
\cite{[Ag12],[PW12]}.

If an atoroidal $PD_3$-group acts geometrically on a locally compact
$CAT(0)$ space then it is Gromov hyperbolic \cite{[KK07]}.
A Gromov hyperbolic $PD_3$-group has boundary $S^2$ \cite{bm91}.

\begin{enumerate} 
\addtocounter{enumi}{27}
      
\item{Is every atoroidal $PD_3 $-group Gromov hyperbolic?}

\item{Does every atoroidal $PD_3 $-group have a boundary in the sense of  \cite{[Be96]}?}

\item{The Cannon Conjecture: is every Gromov hyperbolic $PD_3 $-group
isomorphic to a discrete uniform subgroup of $PSL(2,\mathbb{C})$?}

\item{Does every atoroidal $PD_3 $-group have a nontrivial 
finitely generated subnormal subgroup of infinite index?}

\end{enumerate}

\section{splitting}

The central role played by incompressible surfaces in the geometric study 
of Haken 3-manifolds suggests strongly the importance of splitting theorems for $PD_3$-groups.
This issue was raised in \cite{thom84}, the first paper on $PD_3$-groups.
Kropholler and Roller considered splittings of $PD_n$-groups over $PD_{n-1}$ subgroups 
\cite{[KR88],[KR88a],kr89,[KR89a]}; see also \cite{[Du89]} and \cite{kn13}.
Kropholler gave two different formulations of a torus theorem for $PD_3$-groups,
one extending to higher dimensions but requiring the hypothesis that the group 
have  {\it Max}-$c$, the maximal condition on centralizers  \cite{Kr90},
and the other with a weaker conclusion \cite{[Kr93]}.
Castel has since shown that every $PD_3$-group has  {\it Max}-$c$,
and has given a JSJ-decomposition theorem for $PD_3$-groups and group pairs \cite{ca07}.

In particular, if $G$ has a subgroup $H\cong\mathbb{Z}^2 $ then either $G$ 
splits over a subgroup commensurate with $H$ or it
has a nontrivial abelian normal subgroup \cite{Kr90},
and so is a 3-manifold group \cite{bow}.
If $G$ splits over a $PD_2$-group $H$ then either $G$ is virtually a semidirect product
or $N_G(H)=H$. (See \S7 of \cite{hi06}.)

If $G$ is an ascending HNN extension with $FP_2$ base $H$ then $H$ is 
a $PD_2$-group and is normal in $G$,
and so $G$ is the group of a surface bundle. 
(This follows from Lemma 3.4 of \cite{[BG85]}.)
If $G$ has no noncyclic free subgroup and $G/G'$ is infinite then $G$ is an 
ascending HNN extension with finitely generated base and associated subgroups.
If $G$ is residually finite and has a subgroup isomorphic to $\mathbb{Z}^2$ then either 
$G$ is virtually poly-$Z$ or it has subgroups of finite index with 
abelianization of arbitrarily large rank. 
(A residually finite $PD_3$-group which has a subgroup $H\cong\mathbb{Z}^2$ is 
virtually split over a subgroup commensurate with $H$ \cite{[KR88]}, so 
we may suppose that $G$ splits over $\mathbb{Z}^2$,
and then we may use the argument of \cite{[Ko87]}, which is essentially algebraic.)

\begin{enumerate}
\addtocounter{enumi}{31}

\item{If $G$ is a nontrivial free product with amalgamation or 
HNN extension does it split over a $PD_2$ group?}
  
\item{If $G$ is a nontrivial free product with amalgamation 
is it virtually representable onto $\mathbb{Z}$?}

\item  Can $G$ be a properly ascending HNN extension 
(with base not $FP_2$)?

\item{If $G$ has a subgroup $H$ which is a $PD_2$-group and such that $N_G(H)=H$
does $G$ have a subgroup of finite index which splits over $H$?
In particular is this so if $H\cong\mathbb{Z}^2$?}

\item{Suppose $G$ is not virtually poly-$Z$ and that $G/G'$ is infinite. 
Does $G$ have subgroups of finite index whose abelianization has rank $\geq 2$?}

\item{Suppose that $G$ is an HNN extension with stable letter $t$, 
base $H$ and associated subgroup $F\subset H$. 
Is $\mu (G)=\cap t^k Ft^{-k}$ finitely generated?}
(See \cite{[Ka89]} for a related result on knot groups, and also \cite{[So91]}.)
\end{enumerate}                            

\section{residual finiteness, hopficity, cohopficity}

Let $K_n =\cap\{ H\subset G|[G:H]~ divides ~ n!\}$. 
Then $[G:K_n ]$ is finite, for all 
$n\geq 1$, and $G$ is residually finite if and only if $\cap K_n =1$. 
If $G$ is not virtually representable onto $\mathbb{Z}$ this intersection is also 
the intersection of the terms in the more rapidly descending series given by 
$K_n^{(n)} $, and is contained in $G^{(\omega )} $.

If $G$ has a maximal finite $p$-quotient $P$ for some prime $p$
then $P$ has cohomological period dividing 4,
and so is cyclic, if $p$ is odd, and cyclic or quaternionic, if $p=2$.
Hence if $\beta_1(G;\mathbb{F}_p)>1$ for some odd prime $p$,
or if $\beta_1(G;\mathbb{F}_2)>2$,
then the pro-$p$ completion of $G$ is infinite \cite{[Me90']}.

If  $[G:\cap{K_n}]=\infty$ and $G$ is a 3-manifold group then either 
$G$ is solvable or there is a prime $p$ such that $G$ has subgroups $H$ 
of finite index with $\beta_1(H;\mathbb{F}_p)$ arbitrarily large \cite{[Me90']}.
Hence either some such $H$ maps onto $\mathbb{Z}$ or the pro-$p$ completion of any 
such subgroup with $\beta_1(H;\mathbb{F}_p)>1$ is a pro-$p$ $PD_3$-group
 \cite{[KZ08]}. 
 If $G$ is almost coherent and  $[G:\cap{K_n}]=\infty$ then it satisfies the Tits alternative
 \cite{bb17}.

The groups of 3-manifolds are residually finite, 
by \cite{[He87]} and the Geometrization Theorem. 
Hence they are hopfian, 
i.e., onto endomorphisms of such groups are automorphisms. 
The Baumslag-Solitar groups $\langle x,t\mid tx^pt^{-1}=x^q\rangle$ 
embed in $PD_4$-groups.
Since these groups are not hopfian, 
there are $PD_4$-groups which are not residually finite \cite{[Me90]}. 
No such Baumslag-Solitar relation with $|p|\not=|q|$ holds in any $PD_3$-group $G$;
there is no homomorphism from $\langle x,t\mid tx^pt^{-1}=x^q\rangle$ to $G$ \cite{ca07}. 

Let $\mathcal{X}$ be the class of groups of cohomological dimension 2 which have 
an infinite cyclic subgroup which is commensurate with all of its conjugates.
If $G$ is a $PD_3$-group with no nontrivial abelian normal subgroup and which 
contains a subgroup isomorphic to $\mathbb{Z}^2$ then $G$ splits over an 
$\mathcal{X}$-group \cite{[Kr93]}. (See also \cite{[Kr90b]}.)
This class includes the Baumslag-Solitar groups and also the fundamental groups 
of Seifert fibred 3-manifolds with nonempty boundary.
If $H$ is in the latter class and is not virtually $\mathbb{Z}^2$
then $\sqrt{H}\cong\mathbb{Z}$ and $H/\sqrt{H}$ is a free product of cyclic groups.
It then follows from the result of \cite{ca07} on Baumslag-Solitar relations
that finitely generated $\mathcal{X}$-groups which are subgroups of $PD_3$-groups 
are of this ``Seifert type".

An injective endomorphism of a $PD_3$-group must have image of finite index, 
by Strebel's theorem \cite{[St77]}.
A 3-manifold group satisfies the {\it volume condition}
(isomorphic subgroups of finite index have the same index)
if and only if it is not solvable and is not virtually a product \cite{[WW94], [WY99]}.
In particular, such 3-manifold groups are cohopfian,
i.e., injective endomorphisms are automorphisms.
The volume condition is a property
of commensurability classes; this is not so for cohopficity.

\begin{enumerate}
\addtocounter{enumi}{37}

\item{Does every $PD_3$-group have a proper subgroup of finite index?}

\item{Are all $PD_3 $-groups residually finite?}

\item{Let $\widehat{G}$ be a pro-$p$ $PD_3$-group. Is $G$ virtually
representable onto $\widehat{Z}_p$?} 

\item{Do all $PD_3$-groups other than those which are solvable or are virtually products
satisfy the volume condition?}
\end{enumerate}

\section{other questions}

We conclude with some related questions. 

\begin{enumerate}  
\addtocounter{enumi}{41}

\item{Let $P$ be an indecomposable, non-orientable $PD_3$-complex.
If the orientable double cover $P^+$ is homotopy equivalent to a 3-manifold,
is $P$ itself homotopy equivalent to a 3-manifold?}
                                                  
\item{Let $X$ be a $PD_3$-complex.
Is $X\times{S^1}$ or $X\times{S^1}\times{S^1}$ homotopy equivalent to a closed
manifold?}

\item{Is there an explicit example of a free action of a generalized quaternionic
group $Q(8a,b,1)$ (with $a,b>1$ and $(a,b)=1$) on an homology 3-sphere?}

\item{Is there a purely algebraic analogue of orbifold 
hyperbolization which may be used to show that every $FP$ group of 
cohomological dimension $k$ is a subgroup of a $PD_{2k}$-group?}

\end{enumerate}

See \cite{[Hi04]} and the references there 
\cite{[GW], [GW94], [Wn91], [Wn93]}
for work on maps of nonzero degree between $PD_3$-groups.

\end{document}